\newtheorem{thm}{Theorem}[section]
\newtheorem{lem}[thm]{Lemma}
\newtheorem{cor}[thm]{Corollary}
\newtheorem{con}{Conjecture}
\theoremstyle{definition}
\newtheorem{defin}[thm]{Definition}
\newtheorem*{rem}{Remark}
\newtheorem{exam}[thm]{Example}
\newtheorem*{notat}{Notation}
\newcommand { \ib }[1] {\textit{\textbf{#1}}}
\newcommand { \lin }{\mathop{\rm{lin}}\nolimits}
\begin{document}
\renewcommand{\ib}{\mathbf}
\renewcommand{\proofname}{Proof}
\makeatletter \headsep 10 mm \footskip 10 mm
\renewcommand{\@evenhead}%

\title{Belt distance between facets of space-filling zonotopes\footnote{This work is supported by RFBR grant 08-01-0054652-a and by grant ``Scientific Schools'' \Russian НШ-5413.2010.1 \English}.}
\author{A. Garber, Moscow State University, Russia}
\maketitle
\begin{abstract}
For every $d$-dimensional polytope $P$ with centrally symmetric facets we can associate a ``subway map'' such that every line of this ``subway'' corresponds to set of facets parallel to one of ridges $P.$ The belt diameter of $P$ is the maximal number of line changes that you need to do in order to get from one station to another.

In this paper we prove that belt diameter of $d$-dimensional space-filling zonotope is not greater than $\lceil \log_2\frac45d\rceil$. Moreover we show that this bound can not be improved in dimensions $d\leq 6.$
\end{abstract}
\section{Parallelohedra and Voronoi's conjecture.}

\begin{defin}
A polytope $P\subset\mathbb{R}^d$ is called a {\it $d$-dimensional parallelohedron} if we can tile $\mathbb{R}^d$ by its parallel copies. For example a cube $C^d=[0;1]^d$ is a parallelohedron.
\end{defin}

\begin{defin}
The {\it Dirichlet-Voronoi domain} or {\it Dirichlet-Voronoi polytope} for a $d$-dimensional lattice $\Lambda^d$ in $\mathbb{R}^d$ is called a polytope consist of all points of $\mathbb{R}^d$ which are closer to a fixed lattice point $O\in\Lambda^d$ than to any other point from $\Lambda^d,$ i.e.
$$DV_{\Lambda^d}:=\{X\in\mathbb{R}^d:\text{for any }
Y\in \Lambda^d \text{ holds }XO\leq XY, O\in\Lambda^d\ \text{ is fixed}\}.$$
\end{defin}

It is clear that the Dirichlet-Vornoi domain does not depend on the point $O$ and it depends only on the lattice $\Lambda^d.$ So it is evident that the Dirichlet-Voronoi polytope for an arbitrary lattice is a parallelohedron. One of the main conjectures in the theory of parallelohedra is a Voronoi's conjecture which states the converse.

\begin{con}[G.Voronoi, \cite{Vor}]\label{Voronoi}
Any $d$-dimensional polytope is affine equivalent to a Dirichlet-Voronoi polytope of some $d$-dimensional lattice.
\end{con}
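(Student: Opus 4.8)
\medskip
\noindent\textbf{A proof strategy.} The route I would take is through the classical notion of a \emph{canonical scaling} of the tiling $\mathcal T=\{P+\lambda:\lambda\in\Lambda\}$ of $\mathbb R^d$ by translates of $P$. By the Minkowski--Venkov theorem the mere hypothesis that $P$ is a parallelohedron forces $P$ and all of its facets to be centrally symmetric, forces every belt of $\mathcal T$ (the cyclic family of facets sharing a fixed ridge) to contain four or six facets, and forces $\mathcal T$ to be face-to-face; these are exactly the local data one has to work with. The plan is then: (i) assign to each facet $F$ of $\mathcal T$ a positive weight $n_F$, constant on classes of parallel facets, so that around every ridge $R$ of $\mathcal T$ the weighted inner normals of the facets of the belt of $R$ sum to zero; (ii) integrate these weights along paths in $\mathcal T$ to obtain a ``centre'' for each tile; (iii) read off from the mutual positions of the centres a positive definite quadratic form $q$ and verify that the Dirichlet--Voronoi polytope of $\Lambda$ with respect to $q$ is an affine image of $P$, which yields the conjecture.

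For step (i) I would introduce the graph whose vertices are the facets of $\mathcal T$, placing on each edge between two facets sharing a ridge the ratio that the four- or six-belt relation at that ridge forces between their weights; a canonical scaling is exactly a consistent solution of this system, i.e.\ one in which the ratios multiply to $1$ around every closed walk. Since $\mathbb R^d$ is simply connected, a $\pi_1$-type argument reduces this consistency to the walks bounding the $3$-dimensional faces of $\mathcal T$, so that everything localizes to the $3$-dimensional combinatorics of the tiling. Granting a canonical scaling, step (ii) is constructive and classical: the weights integrate unambiguously (by (i)) to tile centres, whose mutual positions determine $q$, and the facet hyperplanes together with their weights then coincide with the $q$-Voronoi bisectors, which is step (iii).

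The crux, and the reason the conjecture is still open in general, is step (i): it is not known that the four-or-six belt condition alone forces the ratios to be globally consistent. The cases that have been settled are precisely those in which the $3$-dimensional faces of $\mathcal T$ can be controlled --- $P$ primitive (Voronoi, \cite{Vor}), suitable hypotheses on the dual cells of $P$ (for instance that they are simplices), $d\le 5$, and, most relevant for the present paper, $P$ a zonotope. In the zonotopal case $P=\sum_i[0,v_i]$ the generating vector system $\{v_i\}$ is unimodular; each belt relation then reduces to an identity among the two or three planar vectors obtained by projecting the generators along the corresponding ridge direction, and unimodularity makes these identities integral with coefficients $\pm1$, hence automatically mutually consistent, so that a canonical scaling exists and (ii)--(iii) complete the proof of Voronoi's conjecture for space-filling zonotopes (Erdahl, McMullen). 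This is the class of polytopes on which the remainder of the paper operates.
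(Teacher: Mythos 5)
The statement you were asked about is Voronoi's conjecture, and the paper does not prove it --- it is stated as Conjecture~\ref{Voronoi} precisely because it remains open in full generality. There is therefore no ``paper's own proof'' to compare against; the surrounding text only recalls the equivalence between the conjecture for a given parallelohedron $P$ and the existence of a canonical scaling on the tiling by copies of $P$, and lists the special cases settled to date (Voronoi's primitive case, Zhitomirskii's case of six-facet belts, Erdahl's zonotopal case, Ordine's $3$-irreducible case). Your proposal is, to your credit, honest about this: you explicitly say that step (i) --- global consistency of the belt ratios --- is exactly the unresolved point. But that means what you have written is not a proof of the statement; it is a correct description of the standard strategy together with an acknowledgement that the strategy is not known to close. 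A proof attempt that identifies its own fatal gap is still a proof attempt with a fatal gap.

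Two further points on the content of your sketch. First, the reduction of consistency ``to the walks bounding the $3$-dimensional faces of $\mathcal T$'' by simple connectivity is the right idea (this is essentially the reduction to $(d-2)$-dimensional dual cells used by Voronoi, Zhitomirskii and Ordine), but it does not by itself localize the problem enough to finish: the hard dual cells are exactly those that are neither simplices nor handled by the known cases. Second, in the zonotopal case your claim that unimodularity of the generator system makes the belt relations ``automatically mutually consistent'' compresses the actual content of Erdahl's theorem into a single unsupported sentence; the integrality of the coefficients does not by itself give a globally consistent assignment of weights, and Erdahl's argument via lattice dicings is substantially more involved. Since the present paper only ever uses the zonotopal case as background motivation (its actual results concern belt diameters), none of this affects the paper, but your text should not be read as supplying a proof of Conjecture~\ref{Voronoi}.
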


In the following text we will use some theorems about parallelohedra.

\begin{thm}[H.Minkowski, \cite{Min}]\label{Minkowski}
Any $d$-dimensional parallelohedron satisfies the following properties:

\begin{enumerate}\renewcommand{\theenumi}{{\rm \arabic{enumi}}}
\item $P$ is centrally-symmetric;
\item any facet of $P$ is centrally-symmetric;
\item a projection of $P$ along any of its ridge (face of codimension $2$) is a parallelogram or a centrally-symmetric hexagon.
\end{enumerate}
\end{thm}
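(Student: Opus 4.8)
The plan is to read all three statements off the combinatorics of a translational tiling $\mathcal{T}=\{P+t:t\in T\}$ realizing $P$, after first normalizing that tiling. By the Venkov--McMullen reduction theorems one may assume $\mathcal{T}$ is a \emph{face-to-face lattice tiling}: $T=\Lambda$ is a lattice and any two tiles meet in a common face of both. (I would cite this rather than reprove it; it is the part of the argument I expect to be least elementary.) The one genuinely local observation is then the \emph{facet pairing}: if $P$ and $P+\lambda$ share a $(d-1)$-dimensional piece, face-to-faceness makes it a common facet $F$, so $F-\lambda\subseteq P$ is again a facet of $P$, and comparing the two outward normals across the hyperplane of $F$ shows $F$ and $F-\lambda$ are parallel facets of $P$ with opposite outward normals. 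Thus the facets of $P$ come in opposite pairs, each meeting exactly one neighbouring tile, and the set of facet vectors is symmetric about $0$.

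For property~(1) I would argue with volume. A lattice tiling forces $\operatorname{vol}(P)=\det\Lambda$ and $\operatorname{int}(P-P)\cap\Lambda=\{0\}$; since $Q:=\tfrac12(P-P)$ is centrally symmetric we have $Q-Q=2Q=P-P$, so the same condition says the translates $Q+\lambda$ have disjoint interiors, whence $\operatorname{vol}(Q)\le\det\Lambda=\operatorname{vol}(P)$. On the other hand Brunn--Minkowski gives $\operatorname{vol}(Q)^{1/d}=\operatorname{vol}\!\bigl(\tfrac12P+\tfrac12(-P)\bigr)^{1/d}\ge\operatorname{vol}(P)^{1/d}$, and in the equality case $P$ and $-P$ are translates, i.e.\ $P$ is centrally symmetric. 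Comparing the two inequalities forces equality, proving~(1). (Minkowski's original surface-area argument is an alternative, but this route is shorter.)

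For~(2) and~(3) I would pass to a neighbourhood of a ridge. Fix a ridge $R$ of $P$ and project $\mathcal{T}$ along $\operatorname{aff}(R)$ onto a $2$-plane $\pi$; the finitely many tiles incident to $R$ have $\pi(R)$ as a common vertex and, by face-to-faceness, their images give an edge-to-edge tiling of a neighbourhood of $\pi(R)$ by translates of the polygon $\pi(P)$. Walking once around $\pi(R)$ and using that consecutive polygons are \emph{translates} (never rotations) of one another forces the vertex angles of $\pi(P)$ to fill the full angle $2\pi$ with either $3$ or $4$ tiles and the edges of $\pi(P)$ to occur in opposite parallel pairs; hence $\pi(P)$ is a parallelogram or a centrally symmetric hexagon, which is~(3), and the belt of $R$ has exactly $4$ or $6$ facets with normals in opposite pairs. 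Property~(2) I would then obtain by induction on $d$: each facet $F$ of $P$ inherits, through the $4$- and $6$-belts of its own ridges, a pairing of its ridges into opposite parallel translates, which is precisely central symmetry of $F$ in dimension $d-1$; the base case $d=2$ is trivial. I expect the real work, beyond the two cited reductions, to be exactly this bookkeeping around ridges: making ``the local tiling near $R$ is a planar tiling times $R$'' precise, and organizing the belt relations so that~(2) and~(3) fall out.
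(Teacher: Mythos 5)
First, note that the paper does not prove this statement at all: it is quoted as Minkowski's classical theorem with a bare reference to \cite{Min}, so there is no internal proof to compare against. Judged on its own, your outline follows the standard modern route (Brunn--Minkowski for central symmetry, local analysis at a ridge for the belt condition), and part (1) is essentially complete as written: the packing of $Q=\tfrac12(P-P)$ by $\Lambda$, the inequality $\operatorname{vol}(Q)\le\det\Lambda=\operatorname{vol}(P)$, and the equality case of Brunn--Minkowski do give central symmetry. One caveat: the reduction you cite up front (every translational tile admits a face-to-face lattice tiling) is, in the usual treatments of Venkov and McMullen, itself derived from the three Minkowski conditions, so importing it wholesale risks circularity; your argument for (1) in fact only needs the lattice hypothesis, which is the setting of Minkowski's original theorem, but you should say precisely which version of the reduction you are assuming and why it is independent.

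The genuine gaps are in (2) and (3). For (3), ``the angles fill $2\pi$ with $3$ or $4$ tiles'' is exactly the assertion to be proved and does not follow from walking around the ridge: a priori five translates of $\pi(P)$ could surround $\pi(R)$, and $\pi(P)$ could a priori be an octagon. The missing step is the bookkeeping you explicitly defer: if $\Pi=\pi(P)$ is a centrally symmetric $2m$-gon and the tile $\Pi_i$ places vertex $w_j$ at the central point, then edge-to-edge matching of opposite parallel edges forces $\Pi_{i+1}$ to place vertex $w_{j+m+1}$ there; the orbit of $j\mapsto j+m+1$ in $\mathbb{Z}/2m\mathbb{Z}$ has length $2m$ for $m$ even and $m$ for $m$ odd, and equating the corresponding angle sums with $2\pi$ yields $m=2$ or $m=3$. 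Without this (or an equivalent count) part (3) is unproved. For (2), your inductive step is both incomplete and unnecessarily roundabout: ``the ridges of $F$ come in opposite parallel translated pairs'' is not ``precisely'' central symmetry of $F$ --- passing from paired facet data to central symmetry of a $(d-1)$-polytope requires Minkowski's uniqueness theorem (or the Alexandrov--Shephard theorem the paper cites), and you have not actually shown that the paired ridges are translates of one another. The standard proof of (2) is a two-line consequence of (1): if $F=P\cap(P+\lambda)$ and $c$ is the center of $P$, then $2(c+\lambda)-F$ is the facet of $P+\lambda$ whose outer normal equals the outer normal of $F$ as a facet of $P$, so $2c+\lambda-F$ is a facet of $P$ with that same normal; since a convex polytope has at most one facet with a given outer normal, $2c+\lambda-F=F$, i.e.\ $F$ is centrally symmetric about $c+\lambda/2$. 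I would replace your induction by this.
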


\begin{rem}
In this theorem as well as later if we say about ``projection along $k$-dimensional subspace $\pi$ of $d$-dimensional space'' then we consider a projection onto any $(d-k)$-dimensional space transversal to $\pi.$ All projections onto different transversal planes are affine equivalent and in the case of polytopes or sets of vectors all combinatorial properties are affine invariants.
\end{rem}

\begin{thm}[B.Venkov, \cite{Ven}]\label{Venkov}
If a polytope $P$ satisfies three conditions of the Minkowski theorem then $P$ is a parallelohedron.
\end{thm}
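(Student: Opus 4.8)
Place $P$ so that it is centrally symmetric about the origin. For each facet $F$ of $P$ let $c_F$ be its centre --- it exists by property~2 of Theorem~\ref{Minkowski} --- and set $e_F:=2c_F$. Using that $F$ is symmetric about $c_F$ one checks that the antipodal facet $-F$ of $P$ equals $F-e_F$, so that $P$ and $P+e_F$ meet exactly along the common facet $F$, and $e_{-F}=-e_F$. Let $\Lambda\subset\mathbb{R}^d$ be the subgroup generated by all the vectors $e_F$. The plan is to prove that the family $\mathcal{T}:=\{P+t:\ t\in\Lambda\}$ is a face-to-face tiling of $\mathbb{R}^d$; this immediately shows that $P$ is a parallelohedron (and, incidentally, that $\Lambda$ is a lattice of full rank $d$).

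The first real step is to analyse $\mathcal{T}$ near a ridge. Fix a ridge $R$ of $P$ and project $P$ along the direction of $\mathrm{aff}(R)$ onto a complementary plane; by property~3 the image is a parallelogram or a centrally symmetric hexagon. The facets of $P$ whose projections are the edges of this polygon form a cyclically ordered belt $F_1,\dots,F_{2k}$ with $k\in\{2,3\}$, where (indices mod $2k$) $F_{j+k}$ is the facet antipodal, hence parallel, to $F_j$, consecutive facets share a ridge parallel to $R$, and the belt contains the two facets actually meeting $R$. I would then show that, starting from $P$ and repeatedly passing to the neighbour across the appropriate belt facet, one wraps exactly once around $\mathrm{relint}(R)$ and returns to $P$, so that a small number of translates of $P$ --- three when the polygon is a hexagon, four when it is a parallelogram --- cover a full neighbourhood of $\mathrm{relint}(R)$ with no gaps and no overlaps. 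The transparent way to see this is to reduce to the plane: a centrally symmetric hexagon, or a parallelogram, together with precisely this facet-vector construction, tiles $\mathbb{R}^2$ by the generated rank-$2$ lattice --- a direct computation with the vertices --- and near $\mathrm{relint}(R)$ the family $\mathcal{T}$ is nothing but that planar tiling multiplied by $R$.

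With the ridge case settled, I would globalise by descending induction on the dimension of faces. By the construction together with the previous step, $\mathcal{T}$ already behaves like an honest face-to-face tiling in a neighbourhood of the relative interior of every face of codimension at most $2$ in the cell complex generated by $P+\Lambda$. For a face $G$ of codimension $j\ge 3$ the link of $G$ is a sphere $S^{j-1}$, and since $j-1\ge 2$ that sphere is simply connected; the ``closing-up'' relation proved at each codimension-$2$ face lying in the link then propagates consistently all the way around, with no monodromy, so $\mathcal{T}$ tiles a neighbourhood of $\mathrm{relint}(G)$ as well. A final standard argument --- the set of points covered exactly once by $\mathcal{T}$ is open, closed and nonempty once the (codimension $\ge 2$) union of the cell boundaries is deleted --- then shows that $\mathcal{T}$ covers $\mathbb{R}^d$ exactly once, i.e.\ is a tiling.

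The hard part, I expect, is this final globalisation: converting the local coherence at ridges into global coherence is exactly where property~3 --- the restriction to hexagons and parallelograms --- is genuinely needed, and making the simple-connectedness-of-the-link argument rigorous, while also verifying that $\Lambda$ is discrete of full rank, takes some care. The first two steps are, by comparison, bookkeeping about centrally symmetric polytopes together with the elementary two-dimensional base case.
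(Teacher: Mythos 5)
The paper does not prove this statement at all: it is imported as Venkov's theorem with the citation \cite{Ven} (McMullen later gave another proof), so there is no in-paper argument to compare yours against. Your plan does follow the classical Venkov--McMullen strategy --- facet vectors $e_F=2c_F$, local analysis at a ridge via condition~3 reduced to the tiling of the plane by a parallelogram or a centrally symmetric hexagon, then a globalisation by simple connectivity of links --- and your first two steps (the identity $-F=F-e_F$, the fact that $P$ and $P+e_F$ meet exactly along $F$, and the count of three or four tiles around a ridge) are correct as stated.

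The gap is that essentially all of the content of the theorem sits in the step you defer. First, declaring $\mathcal{T}=\{P+t:\ t\in\Lambda\}$ with $\Lambda$ the group generated by \emph{all} facet vectors is already circular: before the tiling exists you cannot rule out that $\Lambda$ fails to be discrete, or that some $P+t$ with $t$ a long word in the generators overlaps the interior of $P$; so the assertion that near $\mathrm{relint}(R)$ the family $\mathcal{T}$ ``is nothing but that planar tiling multiplied by $R$'' presupposes part of what is to be proved. The standard repair is to build the tiling by propagation --- start from $P$, attach the neighbour across each facet, impose as defining relations the closing-up identities among the facet vectors of each belt --- prove consistency of this development using the simple connectivity of $\mathbb{R}^d$ minus the codimension-$\ge 3$ skeleton, and only afterwards conclude that the translation vectors used form a full-rank lattice. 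Second, the ``no monodromy because links of faces of codimension $\ge 3$ are simply connected'' step is exactly the delicate combinatorial core of Venkov's and McMullen's arguments, and in your write-up it appears as an intention (``I would then show'', ``then propagates consistently'') rather than as an argument. What you have is a correct and well-organised outline of the known proof, not a proof; to be usable it needs the propagation and monodromy argument carried out in detail, or simply a citation to \cite{Ven}, which is how the paper itself handles it.
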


\begin{rem}
The first condition of the theorems by Minkowski and Venkov is redundant because due Alexandrov-Shephard theorem \cite{ASh} any polytope with centrally-symmetric facets is centrally-symmetric itself.
\end{rem}

\begin{defin}
Let $F$ be a ridge of $d$-dimensional parallelohedron $P.$ The set $\mathcal{B}$ of all facets of $P$ parallel to $F$ is called a {\it belt} of $P$ corresponding to $F.$

Due to the third condition of the Minkowski theorem any belt of parallelohedron consist of four or six facets.
\end{defin}

\begin{defin}
We say that the set $F_0,\mathcal{B}_1,F_1,\mathcal{B}_2,\ldots,\mathcal{B}_n,F_n$ of the faces $F_i$ and the belts $\mathcal{B}_i$ is a {\it belt path} on the parallelohedron $P$ if any facet $F_i$ lies in the belts $\mathcal{B}_i$ and $\mathcal{B}_{i+1}$ and any belt $\mathcal{B}_i$ contains the facets $F_{i-1}$ and $F_i.$ We say that {\it belt distance} between facets $F$ and $G$ of parallelohedron $P$ is equal to $k$ if for a belt path $\Gamma$ with $F=F_0$ and $F_n=G$ and with the smallest possible $n$ we have $n=k.$ If facets $P$ and $Q$ are opposite then we will treat a belt distance between $F$ and $G$ as 0.

The belt distance between facets $F$ and $G$ in the polytope $P$ we will denote $d_\mathcal{B}^P(F,G).$
\end{defin}

\begin{rem}
The belt distance defined as above is the same as a usual distance on a graph, whose vertices are all the facets of the parallelohedron $P$ and two vertices are connected by an edge if and only if the correspondent facets lies in some belt.
If we identify the vertices correspondent to the opposite facets then we will obtain a {\it Venkov graph} of the parallelohedron $P$ (see \cite{DG} and \cite{Ord} for more detailed definition and examples).
\end{rem}

In the same way we can define a belt distance between facets of any polytope with centrally symmetric facets.

\begin{defin}
The maximal belt distance between two facets of parallelohedron $P$ is called a {\it belt diameter} of $P.$
\end{defin}

For the moment it was proved some cases of the Voronoi conjecture. In particularly, in 1908 Voronoi proved that the conjecture \ref{Voronoi} holds if the parallelohedron $P$ is primitive, i.e. in any vertex of the tiling of $\mathbb{R}^d$ into parallel copies of $P$ only $d+1$ copies of $P$ meets together. Later, in 1929 Zhitomirskii \cite{Zhit} proved the Voronoi conjecture in the case if the parallelohedron is primitive in any of its ridge, i.e. in any ridge of the correspondent tiling exactly 3 copies of $P$ meets together, or equivalently, any belt of $P$ consists of exactly 6 facets. In 1999 Erdahl proved the Voronoi conjecture for space-filling zonotopes \cite{Erd}. In 2005 Ordine proved the Voronoi conjecture for the 3-irreducible parallelohedrons \cite{Ord}.

All the results mentioned above were obtain using a canonical scaling method. The existence of canonical scaling for a tiling of $\mathbb{R}^d$ into copies of $P$ is equivalent to the Voronoi conjecture for the parallelohedron $P$ (see \cite{Vor}).
{\it Canonical scaling} is a function which associates to any facet $F$ of the tiling of $\mathbb{R}^d$ some real number $n(F)$ such that for any ridge $G$ of the tiling there exist a collection of signs plus or minus with $$\sum_{i=1}^k\pm \ib e_i
n(F_i)=\ib 0.$$ Here $F_i,i=1,\ldots,k$ is the set of all facets of tiling meets at $G$ and $\ib e_i$ is a unit normal vector to $F_i.$ It is evident that $k=3$ or $k=4$ depending on the length of the belt generated by $G.$

If two facets $F$ and $G$ of tiling has a common ridge from the belt of length 6 then the value of the canonical scaling on the one of these facets is uniquely defined by the value on the another and vice versa. Therefore if we have a parallelohedron with relatively small belt diameter then we need to show the existence of the canonical scaling for $P$ on relatively small belt cycles. In this work we will show that the belt diameter of parallelohedral zonotope of dimension $d$ is not greater than $\lceil \log_2{\frac45d}\rceil$.

\section{Zonotopes.}

Most part lemmas of this section can be found in papers \cite{Shep} and \cite{Mcm}.

\begin{defin}\label{zonotope}
A poltyope $P\subset\mathbb{R}^d$ is called a {\it zonotope} if it is a projection of a cube $C^n$ of some dimension $n.$

Equivalently a polytope $P$ is a zonotope if it is a Minkowski sum of a finite set of segments $[\ib 0,\ib v_i],i=1,\ldots,n,$ for some vector set $V=\{\ib v_1,\ldots,\ib
v_n\}\in\mathbb{R}^{d\times n}$ of $n$ vectors from $\mathbb{R}^d.$ In this case a zonotope $P$ is denoted as $P=Z(V)=Z(\ib v_1,\ldots,\ib v_n).$
\end{defin}

\begin{rem}
In most papers, for example in the book of G\"{u}nter Ziegler ``Lectures on Polytopes'' \cite[Sect. 7.3]{Zig} a zonotope $Z(V)=Z(\ib v_1,\ldots,\ib v_n)$ is defined as a Minkowski sum of segments $[-\ib v_i,\ib v_i],i=1,\ldots,n.$ This definition is equivalent to our definition \ref{zonotope} because the resulting polytopes are homothetic with the coefficient 2.
\end{rem}

In two-dimensional case all zonotopes are centrally symmetric polygons and vice versa. In that case a polygon is a Minkowsi sum of segments correspondent to its sides.

There exist polytopes which are zonotopes and parallelohedra simultaneously. For example a cube $C^d$ is a parallelohedron (and a Dirichlet-Voronoi polytope for a standard cubic lattice $\mathbb{Z}^d$) and a zonotope. It is clear that the combinatorial diameter of a cube is equal to 1 because any two non-opposite facets of a cube has a common ridge.

The following example illustrate another polytope from both families of zonotopes and parallelohedra.

\begin{exam}[Permutahedron]
A {\it permutahedron} $\Pi_d$ is a convex hull of $(d+1)!$ points $\sigma(1,2,\ldots,d+1)$ for all permutations $\sigma$ from the group $\mathcal{S}_{d+1}.$ The polytope $\Pi_d$ lies in the $(d+1)$-dimensional space $\mathbb{R}^{d+1}$ but it is a $d$-dimensional polytope because all its vertices lies in a $d$-plane $x_1+\ldots+x_{d+1}=1+\ldots+(d+1).$

The permutahedron $\Pi_d$ is a zonotope. It can be represented as a Minkowski sum of $\displaystyle\frac{d(d+1)}{2}$ segments defined by vectors $\ib e_i-\ib e_j, 1\leq i<j\leq d+1,$ here $\ib e_k$ is a $k$-th vector of the standard basis in $\mathbb{R}^{d+1}.$

The polytope $\Pi_d$ is a parallelohedron and moreover it is the Dirichlet-Voronoi polytope for a lattice generated by vectors $\ib e_1+\ib e_2+\ldots+\ib e_{d+1}-(d+1)\ib e_k$ and these vectors are parallel to the hyperplane of the permutahedron $\Pi_d$ \cite{GP}.

Now we will find the belt diameter of the permutahedron.

There exists the following combinatorial description of all faces of the permutahedron \cite{Zig,GP}. For every face $F$ of codimension $k$ of $\Pi_d$ there exist an ordered partition of the set $\{1,2,\ldots, d+1\}$ into $k+1$ subsets $A_1,A_2,\ldots,A_{k+1}.$ In that case the vertices of $F$ are the points with the coordinates from 1 to $|A_1|$ on the places correspondent to elements from $A_1;$ coordinates from $|A_1|+1$ to $|A_1|+|A_2|$ on the places correspondent to elements from $A_2$ and so on (here $|X|$ denotes the cardinality of the set $X$). We will denote a face correspondent to a partition $A_1,\ldots, A_{k+1}$ as $F(A_1,\ldots,A_{k+1}).$

With such description a face $F(A_1,\ldots, A_k)$ contains in a face $F(B_1,\ldots,B_m)$ with $k\geq m$ if and only if $B_1=A_1\cup\ldots\cup A_{k_1}, B_2=A_{k_1+1}\cup\ldots\cup A_{k_2}$ and so on for some parameters $k_1<k_2<\ldots<k_m=k.$
Therefore the correspondent to a ridge $F(X,Y,Z)$ contains exactly 6 facets
$$F(X\cup Y,Z),F(X,Y\cup Z),F(X\cup Z,Y),F(Z,X\cup Y),F(Y\cup Z,X)
\text{  }F(Y,X\cup Z).$$
And the neighbor facets in this list intersects in ridges
$$F(X,Y,Z), F(X,Z,Y), F(Z,X,Y),F(Z,Y,X),F(Y,Z,X)\text{  }F(Y,X,Z).$$

Consider two arbitrary facets $F(A,B)$ and $F(C,D)$ of the permutahedron $\Pi_d.$ Consider four subsets $A\cap C, A\cap D, B\cap C, B\cap D.$ If one of these subsets is empty then the combinatorial distance between $F(A,B)$ and $F(C,D)$ is equal to 1. Otherwise consider two belts correspondent to ridges $F(A\cap C,
A\cap D, B)$ and $F(C, A\cap D, B\cap D).$ The first one contains the facet $F(A,B)$ and the second one contains the facet $F(C,D).$ Also both these belts contains the facet $F((A\cap C)\cup B,A\cap
D)=F((B\cap D)\cup C,A\cap D).$ Therefore the belt diameter of $d$-dimensional permutahedron ($d\geq3$) is equal to 2.
\end{exam}

\begin{exam}
Consider two- and three-dimensional cases more accurately. On the plane there exists only two combinatorial types of parallelohedra, i.e. parallelogram and centrally symmetric hexagon. In both cases all ridges (i.e. vertices) lies in one belt, so the combinatorial diameter of any two-dimensional parallelohedron is equal to 1.

In three-dimensional case there exist five combinatorial distinct parallelohedra, they were obtain by Fedorov \cite{Fed}. The Fedorov's solids are cube, centrally symmetric hexagonal prism, rhombic dodecahedron, elongated dodecahedron and truncated octahedron.

\begin{figure}[!ht]
\begin{center}
\includegraphics[scale=0.6]{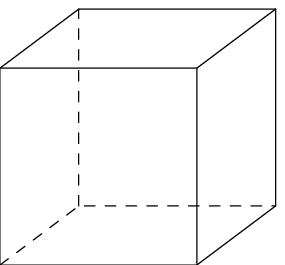}
\hskip 2cm
\includegraphics[scale=0.6]{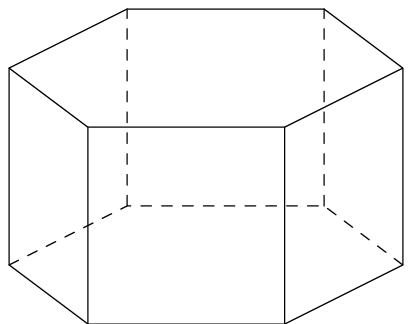}
\caption{Cube and hexagonal prism.}
\end{center}
\end{figure}

\begin{figure}[!ht]
\begin{center}
\includegraphics[scale=0.6]{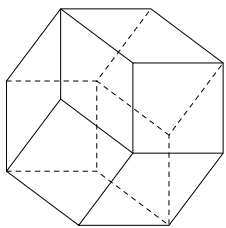}
\caption{Rhombic dodecahedron.}
\end{center}
\end{figure}

\begin{figure}[!ht]
\begin{center}
\includegraphics[scale=0.6]{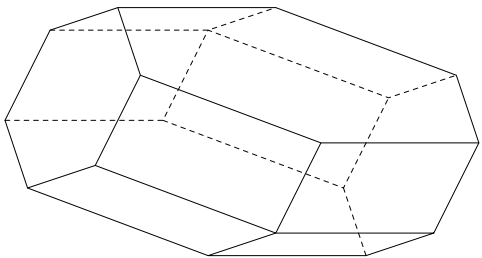}
\caption{Elongated dodecahedron.}
\end{center}
\end{figure}

\begin{figure}[!ht]
\begin{center}
\includegraphics[scale=0.6]{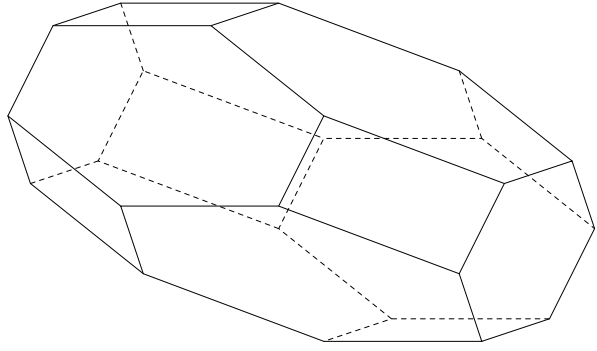}
\caption{Truncated octahedron.}
\end{center}
\end{figure}

It is easy to see that combinatorial diameters of a cube and a hexagonal prism are equal to 1 and combinatorial diameters of all other three-dimensional parallelohedra are equal to 2.

In two- and three-dimensional cases any parallelohedron is also a zonotope but is is not true even for dimension 4. The-well know 24-cell \cite[Tab. II]{24cell} is a parallelohedron but not a zonotope.
\end{exam}

Let us remind the definitions of the supporting plane and the face of polytope.

\begin{defin}
A hyperplane $\pi\subseteq\mathbb{R}^d$ is called a {\it supporting hyperplane} of the polytope $P\subseteq\mathbb{R}^d$ if $P$ lies in one of the closed halfspaces determined by the plane $\pi$ and the intersection $\pi \cap P$ is non-empty. In that case the intersection $\pi \cap P$ is called a {\it face} of $P$ and it is denoted as $F^P_\pi.$
\end{defin}

\begin{lem}\label{faces}
Consider $\pi$ is a supporting plane of the zonotope $Z(V).$ Let $V_\pi$ denotes a set of vectors consist of vectors of $V$ parallel to $\pi.$ Then polytopes $Z(V_\pi)$ and $F_\pi^{Z(V)}$ are parallel copies.
\end{lem}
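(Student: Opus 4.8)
The plan is to work directly from the defining representation $Z(V) = \sum_{i=1}^n [\mathbf 0, \mathbf v_i]$ and to unpack which vertices of the cube $C^n$ are mapped by the projection onto the given face $F^{Z(V)}_\pi$. Write $\mathbf u$ for the outward normal of the supporting hyperplane $\pi$, normalized so that $\langle \mathbf u, \mathbf x\rangle \le c$ for all $\mathbf x \in Z(V)$ with equality on $F^{Z(V)}_\pi$. Partition the vector set as $V = V_\pi \sqcup V^+ \sqcup V^-$, where $V_\pi = \{\mathbf v_i : \langle \mathbf u, \mathbf v_i\rangle = 0\}$ consists of the vectors parallel to $\pi$, while $V^+ = \{\mathbf v_i : \langle \mathbf u, \mathbf v_i\rangle > 0\}$ and $V^- = \{\mathbf v_i : \langle \mathbf u, \mathbf v_i\rangle < 0\}$. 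The key observation is that a point $\mathbf x = \sum_i t_i \mathbf v_i$ with all $t_i \in [0,1]$ maximizes $\langle \mathbf u, \mathbf x\rangle$ precisely when $t_i = 1$ for every $\mathbf v_i \in V^+$ and $t_i = 0$ for every $\mathbf v_i \in V^-$, while the $t_i$ attached to $V_\pi$ are free to range over $[0,1]$.

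From this I would conclude that
\[
F^{Z(V)}_\pi \;=\; \mathbf w \;+\; \sum_{\mathbf v_i \in V_\pi} [\mathbf 0, \mathbf v_i], \qquad \text{where } \mathbf w = \sum_{\mathbf v_i \in V^+} \mathbf v_i .
\]
Indeed, any point of $Z(V)$ attaining the maximal value of $\langle \mathbf u, \cdot\rangle$ must have its $V^+$- and $V^-$-coordinates pinned as above, and conversely every such point lies in $Z(V)$ and on $\pi$; so the face is exactly the translate by the fixed vector $\mathbf w$ of the Minkowski sum of the segments coming from $V_\pi$, which is by definition $Z(V_\pi)$. Hence $F^{Z(V)}_\pi = \mathbf w + Z(V_\pi)$ is a parallel copy (indeed a translate) of $Z(V_\pi)$, which is the assertion.

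The only point requiring a little care — and the place I would expect a careful reader to want detail — is the "precisely when" in the characterization of the maximizers: one must check both that such a choice of $t_i$ is feasible (it is, since each chosen value lies in $[0,1]$) and that no other choice attains the maximum. The latter is immediate because $\langle \mathbf u, \sum t_i \mathbf v_i\rangle = \sum t_i \langle \mathbf u, \mathbf v_i\rangle$ is a sum of independent linear terms, each maximized over $t_i \in [0,1]$ at the stated endpoint, with a strict loss whenever $\mathbf v_i \in V^+ \cup V^-$ and $t_i$ is moved off that endpoint; the terms from $V_\pi$ contribute $0$ regardless. One should also note that the description of $Z(V)$ as a Minkowski sum of segments, together with the remark on affine invariance of combinatorial data of zonotopes, lets us read "parallel copy" as "translate" here without loss of generality, since the supporting hyperplane and its face are defined intrinsically.
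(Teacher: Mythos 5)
Your proof is correct. It takes a mildly different route from the paper's: the paper uses the definition of $Z(V)$ as a projection of the cube $C^n$, lifts the supporting hyperplane $\pi$ to a supporting hyperplane $\pi\times\psi$ of $C^n$, observes that the resulting face of the cube is a subcube spanned by exactly those edges projecting to $V_\pi$, and notes that this subcube projects simultaneously onto $F_\pi^{Z(V)}$ and onto a translate of $Z(V_\pi)$. You instead work with the Minkowski-sum description in $\mathbb{R}^d$ and explicitly maximize the support functional $\langle \mathbf u,\cdot\rangle$ over $\sum t_i\mathbf v_i$, pinning $t_i=1$ on $V^+$ and $t_i=0$ on $V^-$ and leaving the $V_\pi$-coordinates free, which yields the sharper conclusion $F_\pi^{Z(V)}=\mathbf w+Z(V_\pi)$ with $\mathbf w=\sum_{\mathbf v_i\in V^+}\mathbf v_i$. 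The two arguments encode the same idea (your set of maximizing coefficient vectors is precisely the face of $C^n$ the paper invokes), but yours is more self-contained and explicit --- it identifies the translation vector and avoids any appeal to faces of the high-dimensional cube --- at the cost of a short computation that the paper replaces by the general fact that faces of a cube are subcubes. One small point handled correctly but worth keeping in mind: since the representation $\mathbf x=\sum t_i\mathbf v_i$ need not be unique, the ``precisely when'' should be read as a statement about every representation of a maximizing point, which your term-by-term argument does justify.
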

\begin{rem}
If a zonotope has an empty set of generating vectors then we will treat it as one point (the origin).
\end{rem}

\begin{proof}
Let $Z(V)=Z(\ib v_1,\ldots \ib v_n)$ is a projection of the cube $C^n\subset\mathbb{R}^n$ onto the space $\mathbb{R}^d$ along the $(n-d)$-dimensional space $\pi.$ Consider a hyperplane $\pi\times\psi$ in the space $\mathbb{R}^n,$ this plane is a supporting plane for the cube $C^n$ and therefore it determines a face $F$ of the cube $C^n.$ The face $F$ is cube of some dimension and this cube is generated by sides of the cube $C^n$ which are projected to vectors of $V$ parallel to $\pi.$ Hence the whole face $F$ is projected into the parallel copy of the zonotope $Z(V_\pi)$ and also this face is projected into the face $F^{Z(V)}_\pi.$
\end{proof}

\begin{cor}\label{faces2}
Let $V$ be a set of vectors from $\mathbb{R}^d$ and $U$ is a proper subset of $V$ such that $(\lin U)\cap V=U$ (here $\lin U$ denotes the linear span of the vector-set $U$). Then the set $U$ determines some set of parallel faces of the zonotope $Z(V)$ and these faces are translates of the zonotope $Z(U)$ and has dimension $\dim\lin U.$ If $\dim\lin U=d-1$ then $U$ determines a pair of opposite facets of $Z(V).$
\end{cor}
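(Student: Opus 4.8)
The plan is to produce the faces in question by intersecting $Z(V)$ with supporting hyperplanes and then to apply Lemma~\ref{faces}. The only thing to arrange by hand is a linear functional that ``sees'' exactly $U$ among the generators.

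First I would choose a linear functional $\varphi\colon\mathbb{R}^d\to\mathbb{R}$ with $\varphi|_{\lin U}\equiv 0$ and $\varphi(\ib v)\neq 0$ for every $\ib v\in V\setminus U$. Such a $\varphi$ exists: since $U$ is a proper subset of $V$ and $(\lin U)\cap V=U$, the span $\lin U$ is strictly smaller than $\lin V$, so the space $W$ of functionals vanishing on $\lin U$ has positive dimension; and for each $\ib v\in V\setminus U$ we have $\ib v\notin\lin U$, hence $\{\varphi\in W:\varphi(\ib v)=0\}$ is a proper subspace of $W$. A finite union of proper subspaces cannot cover $W$, so $\varphi$ can be taken outside all of them.

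Next, set $c=\max_{X\in Z(V)}\varphi(X)$ and $\pi=\{X\in\mathbb{R}^d:\varphi(X)=c\}$. Then $\pi$ is a supporting hyperplane of $Z(V)$, and a generator $\ib v\in V$ is parallel to $\pi$ exactly when $\varphi(\ib v)=0$, i.e. exactly when $\ib v\in U$; thus $V_\pi=U$. By Lemma~\ref{faces} the face $F_\pi^{Z(V)}$ is a translate of $Z(V_\pi)=Z(U)$, and since $U$ spans $\lin U$ this face is $(\dim\lin U)$-dimensional with affine hull a translate of $\lin U$. Applying the same construction to $-\varphi$ (equivalently, using the minimum of $\varphi$ on $Z(V)$ in place of the maximum), and more generally letting $\varphi$ range over the complement in $W$ of the finitely many proper subspaces above, we obtain a collection of faces of $Z(V)$, each a translate of $Z(U)$ and each with affine hull parallel to $\lin U$; this is the asserted set of parallel faces.

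Finally, in the case $\dim\lin U=d-1$ (so that $Z(V)$ is $d$-dimensional), the space $W$ is one-dimensional, $\varphi$ is unique up to a nonzero scalar, and the requirement $\varphi(\ib v)\neq 0$ on $V\setminus U$ holds automatically. The two faces obtained above from $\varphi$ and $-\varphi$ are then facets, and they are the reflections of one another in the centre of the centrally symmetric zonotope $Z(V)$, hence a pair of opposite facets; moreover any facet parallel to $\lin U$ must have a supporting functional proportional to $\varphi$, so these are the only facets determined by $U$. The step I expect to need the most care is checking that $\varphi$ can be chosen so that $V_\pi$ is exactly $U$ and not a larger subset; once that is in place, everything reduces to Lemma~\ref{faces}.
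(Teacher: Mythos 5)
Your proof is correct and follows essentially the same route as the paper: the paper's own argument is just to take a supporting hyperplane parallel to every vector of $U$ and to no vector of $V\setminus U$ and apply Lemma~\ref{faces}, which is exactly what your functional $\varphi$ produces. The only difference is that you spell out the existence of such a hyperplane (via the fact that a finite union of proper subspaces of $W$ cannot cover $W$) and the identification of the opposite pair in the codimension-one case, details the paper leaves implicit.
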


\begin{proof}
It is enough to take a supporting hyperplane $\pi$ parallel to any vector from $U$ and not parallel to any vector from $V\setminus U$ and to apply the lemma \ref{faces} to the hyperplane $\pi.$
\end{proof}

\begin{lem}\label{2or3}
A zonotope $Z(V)$ of dimension $d$ is a parallelohedron if and only if for any $(d-2)$-dimensional vector-subset $U\subset V$ the projections of vectors from $V$ along subspace $\lin U$ gives vectors of at most three directions.\end{lem}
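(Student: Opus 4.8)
The plan is to reduce the claim to the third condition of the Minkowski--Venkov characterization of parallelohedra (Theorems~\ref{Minkowski} and~\ref{Venkov}). The first two conditions are automatic for a zonotope: $Z(V)$ is a Minkowski sum of segments, hence centrally symmetric, and by Lemma~\ref{faces} every face of $Z(V)$ is again a translate of a zonotope, hence also centrally symmetric. Thus $Z(V)$ is a parallelohedron if and only if the projection of $Z(V)$ along each of its ridges is a parallelogram or a centrally symmetric hexagon.

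Next I would identify ridges of $Z(V)$ with $(d-2)$-dimensional subsets. If $F$ is a ridge, write $F=F^{Z(V)}_\pi$ for a supporting hyperplane $\pi$; by Lemma~\ref{faces} the set $U:=V_\pi$ of generators parallel to $\pi$ satisfies $(\lin U)\cap V=U$ and $\dim\lin U=\dim F=d-2$, and $F$ is a translate of $Z(U)$. Conversely, for an arbitrary $(d-2)$-dimensional subset $U\subseteq V$ put $U':=(\lin U)\cap V$; then $\lin U'=\lin U$ is still $(d-2)$-dimensional, $U'$ is a proper subset of $V$ (since $V$ spans $\mathbb R^d$), and by Corollary~\ref{faces2} $U'$ determines ridges of $Z(V)$. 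So, at the level of the $(d-2)$-dimensional directions, ridges of $Z(V)$ correspond exactly to subspaces of the form $\lin U$ with $U\subseteq V$.

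The heart of the argument is a direct description of the projection: projecting $Z(V)=Z(\ib v_1,\dots,\ib v_n)$ along $\lin U$ onto the $2$-dimensional quotient gives the planar zonotope $Z(\bar{\ib v}_1,\dots,\bar{\ib v}_n)$, where $\bar{\ib v}_i$ is the image of $\ib v_i$; the generators lying in $\lin U$ map to $\ib 0$ and the others to nonzero vectors. A planar zonotope whose nonzero generators have $k$ distinct directions is a centrally symmetric $2k$-gon; it is a parallelogram iff $k=2$ and a centrally symmetric hexagon iff $k=3$, and $k\ge 2$ always, since otherwise the projection would be at most one-dimensional and $V$ would not span $\mathbb R^d$. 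Hence condition~(3) at the ridge attached to $U$ is equivalent to the nonzero projections of $V$ along $\lin U$ having at most three directions. Finally, replacing $U$ by $U'=(\lin U)\cap V$ only adds extra generators that collapse to $\ib 0$ and does not change the set of directions of the nonzero projections, so ``all ridges satisfy~(3)'' is equivalent to ``for every $(d-2)$-dimensional $U\subseteq V$ the projections of $V$ along $\lin U$ have at most three directions,'' as required. The only point needing care is the ridge/subset dictionary of the second paragraph --- matching dimensions and checking that every ridge is accounted for; once that is in place, everything else is a routine unwinding of Lemmas~\ref{faces} and~\ref{faces2}.
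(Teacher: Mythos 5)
Your proposal is correct and follows essentially the same route as the paper: identify ridges with $(d-2)$-dimensional subsets of $V$ via Corollary~\ref{faces2}, observe that the projection along such a ridge is the planar zonotope generated by the projected vectors (with generators in $\lin U$ collapsing to $\ib 0$), and invoke the Minkowski and Venkov theorems together with the fact that a planar zonotope is a parallelogram or centrally symmetric hexagon exactly when its generators span at most three directions. You spell out a few points the paper leaves implicit (that the first two Minkowski conditions are automatic for zonotopes, and the $U$ versus $(\lin U)\cap V$ bookkeeping), but the argument is the same.
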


\begin{proof}
By the corollary \ref{faces2} all ridges of the zonotope $Z(V)$ can be determined by all $(d-2)$-dimensional subsets $U$ of the vector-set $V$ and a projection of $Z(V)$ along the ridge determined by the single subset $U$ is a zonotope constructed on the projections of the vectors from $V$ along the subset $U$. Here vectors from $\lin U$ are projected into zero-vector and they does not change Minkowski sum of other vectors.

Now it is sufficient to apply theorems of Minokowski \ref{Minkowski} and Venkov \ref{Venkov} to the polytope $Z(V)$ and to mention that two-dimensional zonotope $Z(K)$ is a parallelogram or a centrally symmetric hexagon if and only if the set $K$ contains vectors of at most three directions.
\end{proof}

\begin{lem}\label{setminus}
Consider the zonotope $Z(V\cup\{\ib u\})=Z(\ib v_1,\ib v_2,\ldots,\ib v_n, \ib u)\subseteq{R}^d$ is a $d$-dimensional parallelohedron and the zonotope $Z(V)=Z(\ib v_1,\ib v_2,\ldots,\ib v_n)$ is a $d$-dimensional zonotope then $Z(V)$ is a $d$-dimensional paralelohedron.
\end{lem}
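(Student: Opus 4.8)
The plan is to deduce this directly from the combinatorial criterion of Lemma \ref{2or3}. That lemma says that a $d$-dimensional zonotope $Z(W)$ is a parallelohedron if and only if, for every vector-subset $U\subseteq W$ with $\dim\lin U=d-2$, the projections of the vectors of $W$ along the subspace $\lin U$ point in at most three directions. Since $Z(V)$ is assumed to be $d$-dimensional, the task is reduced to checking this three-directions condition for an arbitrary $(d-2)$-dimensional subset $U\subseteq V$.

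The main idea is that such a $U$ is at the same time a $(d-2)$-dimensional subset of the larger set $V\cup\{\ib u\}$. Applying the ``only if'' part of Lemma \ref{2or3} to the parallelohedron $Z(V\cup\{\ib u\})$ and to this very $U$, we obtain that the projections of all vectors of $V\cup\{\ib u\}$ along $\lin U$ realize at most three directions. The projections of the vectors of $V$ along $\lin U$ are obtained from those of $V\cup\{\ib u\}$ simply by discarding the image of $\ib u$; hence they form a sub-family and therefore also realize at most three directions. By the ``if'' part of Lemma \ref{2or3} this is precisely what is needed to conclude that $Z(V)$ is a $d$-dimensional parallelohedron.

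The argument is short, and the only point deserving attention --- the closest thing to an obstacle --- is the monotonicity of the ``number of directions'' under enlarging a vector set: adjoining the single projected vector can create at most one new direction and cannot cause two previously distinct directions to coincide or disappear, so ``at most three directions for $V\cup\{\ib u\}$'' genuinely forces ``at most three directions for $V$''. It is also worth noting why the hypothesis that $Z(V)$ is itself $d$-dimensional cannot be dropped: if $V$ failed to span $\mathbb{R}^d$, the three-directions condition inherited from $V\cup\{\ib u\}$ would still hold, yet $Z(V)$ would not be a $d$-dimensional parallelohedron for the trivial reason of lying in a proper subspace.
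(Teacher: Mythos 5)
Your proposal is correct and follows essentially the same route as the paper: the paper's proof also takes an arbitrary $(d-2)$-dimensional subset of $V$, observes that it is a $(d-2)$-dimensional subset of $V\cup\{\ib u\}$, and applies Lemma \ref{2or3} in both directions. Your added remarks on the monotonicity of the number of directions and on the necessity of the full-dimensionality hypothesis are sound but not part of the paper's (terser) argument.
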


\begin{proof}
Let $K$ is a $(d-2)$-dimensional vector-subset of the set $V,$ then $K$ is a $(d-2)$-dimensional subset of the set $V\cup\{\ib u\}.$ By lemma \ref{2or3} vectors of projections of $V\cup\{\ib u\}$ along $K$ are of at most three directions therefore the same statement is true for the set $V.$ So the zonotope $Z(V)$ satisfies conditions of the lemma \ref{2or3}, q.e.d.
\end{proof}

In the same way we can prove the following lemma.

\begin{lem}[A. Magazinov, \cite{Mag}]\label{alpha}
If the zonotope $Z(V)=Z(\ib v_1,\ldots, \ib v_{n-1}, \ib v_n)$ is a parallelohedron then the zonotope $Z(V')=Z(\ib v_1,\ldots,\ib v_{n-1}, \alpha \ib v_n)$ fir any $\alpha\neq 0$ is a parallelohedron too.
\end{lem}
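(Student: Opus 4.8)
The plan is to imitate the proof of Lemma \ref{setminus}: verify the criterion of Lemma \ref{2or3} directly for $Z(V')$. First I would record the trivial but essential observation that multiplying a single generator by a nonzero scalar changes neither the linear span of any subcollection of the generators nor the direction of that generator. In particular $\lin V' = \lin V = \mathbb{R}^d$, so $Z(V')$ is again a $d$-dimensional zonotope and Lemma \ref{2or3} is applicable to it; it then remains to show that for every $(d-2)$-dimensional vector-subset $U\subseteq V'$ the projections of the vectors of $V'$ along $\lin U$ occupy at most three directions.

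Next I would fix such a $U$ and split into two cases. If $\alpha\mathbf{v}_n\notin U$, then $U\subseteq\{\mathbf{v}_1,\dots,\mathbf{v}_{n-1}\}\subseteq V$ is itself a $(d-2)$-dimensional vector-subset of $V$; applying Lemma \ref{2or3} to the parallelohedron $Z(V)$, the projections of $V$ along $\lin U$ take at most three directions, and the projections of $V'$ along $\lin U$ differ only in that the image of $\mathbf{v}_n$ is rescaled by $\alpha$, so no new direction is introduced. If $\alpha\mathbf{v}_n\in U$, I would pass to $\widetilde U := (U\setminus\{\alpha\mathbf{v}_n\})\cup\{\mathbf{v}_n\}$; since $\lin\widetilde U = \lin U$ this is a $(d-2)$-dimensional vector-subset of $V$, and because $\alpha\mathbf{v}_n\in\lin U$ its projection along $\lin U$ vanishes, while for $i\le n-1$ the projection of $\mathbf{v}_i$ along $\lin U$ coincides with its projection along $\lin\widetilde U$. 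Hence the nonzero projections of $V'$ along $\lin U$ are exactly the nonzero projections of $V$ along $\lin\widetilde U$ — again at most three directions by Lemma \ref{2or3} applied to $Z(V)$. In both cases the hypothesis of Lemma \ref{2or3} is met, so $Z(V')$ is a parallelohedron.

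I do not expect a genuine obstacle here: the whole content is that the ridge combinatorics of $Z(V')$ — encoded by its $(d-2)$-dimensional vector-subsets together with the induced projection directions — is carried onto that of $Z(V)$ by the harmless exchange $\alpha\mathbf{v}_n\leftrightarrow\mathbf{v}_n$. The one point deserving a moment's care is to confirm that this exchange really preserves spans and projection directions in both the ``$\alpha\mathbf{v}_n\in U$'' and ``$\alpha\mathbf{v}_n\notin U$'' situations, which is precisely the routine case split above; this is why the excerpt may legitimately assert that the lemma follows ``in the same way'' as Lemma \ref{setminus}.
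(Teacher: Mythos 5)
Your proof is correct and follows exactly the route the paper intends: the paper gives no explicit proof of Lemma \ref{alpha}, merely asserting it can be proved ``in the same way'' as Lemma \ref{setminus}, i.e.\ by verifying the criterion of Lemma \ref{2or3}, and your case split on whether $\alpha\ib v_n$ lies in the chosen $(d-2)$-dimensional subset $U$ is precisely the routine elaboration of that remark. The key observations --- that rescaling by $\alpha\neq 0$ preserves linear spans and projection directions, and that the exchange $\alpha\ib v_n\leftrightarrow\ib v_n$ carries the ridge data of $Z(V')$ onto that of $Z(V)$ --- are exactly what is needed.
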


The next lemma and corollary in more general case for arbitrary parallelohedra can be found in the following work of B.Venkov \cite{Ven2}.

\begin{lem}\label{projection}
Consider a space-filling zonotope $Z(V)$ and its edge $e.$ The projection of $Z(V)$ along $e$ is also a parallelohedron.
\end{lem}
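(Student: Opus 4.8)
The plan is to reduce the statement to Lemma~\ref{2or3}: a $k$-dimensional zonotope $Z(W)$ is a parallelohedron if and only if for every $(k-2)$-dimensional vector-subset of $W$ the projection of $W$ along its linear span uses at most three directions. We will apply this criterion to the $(d-1)$-dimensional zonotope obtained from $Z(V)$ by projecting along $e$, deriving each required ``at most three directions'' condition from the corresponding condition for $Z(V)$ itself.

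First I would describe the projection in terms of generators. By Lemma~\ref{faces} every face of $Z(V)$ is a translate of $Z(V_\pi)$ for the set $V_\pi$ of generators parallel to the supporting plane; in particular every edge of $Z(V)$ is parallel to one of the vectors of $V$. Fix a generator $\ib u\in V$ parallel to $e$ and let $\sigma$ be the projection of $\mathbb{R}^d$ along $\lin\{\ib u\}$. Since $\sigma$ is linear and commutes with Minkowski sums, and each segment $[\ib 0,\ib v_i]$ with $\ib v_i$ parallel to $\ib u$ is collapsed to a point, we obtain $\sigma\bigl(Z(V)\bigr)=Z(V')$, where $V'$ consists of the nonzero vectors among $\sigma(\ib v_1),\dots,\sigma(\ib v_n)$; this $Z(V')$ is a $(d-1)$-dimensional zonotope in $\mathbb{R}^d/\lin\{\ib u\}\cong\mathbb{R}^{d-1}$.

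Next, to check the criterion for $Z(V')$, take an arbitrary $(d-3)$-dimensional vector-subset $W'\subset V'$. For each vector of $W'$ choose a generator of $V$ mapped onto it by $\sigma$ (such a generator is automatically not parallel to $\ib u$), and let $W\subset V$ be the resulting subset, so that $\sigma(W)=W'$. Set $W^{+}:=W\cup\{\ib u\}$. A rank--nullity count gives $\dim\lin W^{+}=\dim\lin W'+1=d-2$, and in fact $\lin W^{+}=\sigma^{-1}(\lin W')$, regardless of whether $\ib u$ already lies in $\lin W$. Hence the projection of $\mathbb{R}^d$ along $\lin W^{+}$ factors through $\sigma$ followed by the projection along $\lin W'$, so the set of directions produced by projecting $V$ along $\lin W^{+}$ equals the set of directions produced by projecting $V'$ along $\lin W'$.

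Finally, since $Z(V)$ is a space-filling zonotope and $W^{+}$ is a $(d-2)$-dimensional vector-subset of $V$, Lemma~\ref{2or3} guarantees that projecting $V$ along $\lin W^{+}$ yields at most three directions, and therefore so does projecting $V'$ along $\lin W'$. As $W'$ was arbitrary, Lemma~\ref{2or3} applied to the $(d-1)$-dimensional zonotope $Z(V')$ shows that $\sigma(Z(V))=Z(V')$ is a parallelohedron. The only point that needs care is the lifting step: one must verify that $W^{+}$ genuinely spans a $(d-2)$-dimensional space equal to $\sigma^{-1}(\lin W')$, so that the two projections can be identified; the rest is a direct application of Lemmas~\ref{faces} and~\ref{2or3}.
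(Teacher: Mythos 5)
Your proposal is correct and follows essentially the same route as the paper: reduce to Lemma~\ref{2or3}, lift a $(d-3)$-dimensional subset of the projected generators to a $(d-2)$-dimensional subset of $V$ by adjoining the generator parallel to $e$, identify the two projections via the factorization, and apply the criterion to $Z(V)$. The only cosmetic difference is that the paper takes as the lift the full preimage set $K_V$ of $K\cup\{\ib 0\}$ rather than one chosen preimage per vector plus $\ib u$, but the relevant linear span is the same.
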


\begin{proof}
According to the corollary \ref{faces2} there exists a vector $\ib v_i\in V$ which is a translation of an edge $e.$ Let $U$ be a set of projections of vectors from $V$ along vector $\ib v_i$ then projection $Z(V)$ along $e$ is a zonotope $Z(U).$

Let $K$ be a $(d-3)$-dimensional subset of $U$ and let $K$ determines a face $Z(K)$ of zonotope $Z(U).$ Denote as $K_V$ a subset of $V$ of vectors projected into vectors from $K\cup \{\ib 0\}$. First we will show that $K_V$ is $(d-2)$-dimensional set. After projection along vector $\ib v_i\in K_V$ the set $K_V$ projects on $(d-3)$-dimensional set $K$ and after this projection its dimension can decrease at most at 1. Moreover we have the following equality for linear spaces $\lin K_V=\lin K\oplus \lin\{\ib v_i\},$ so projection along $K_V$ is composition of projections along $K$ and $\ib v_i.$ Hence the set $K_V$ determines a $(d-2)$-dimensional face $Z(K_V)$ of parallelohedron $Z(V).$

Therefore the set of projections of vectors from $U$ along $K$ coincides with the set of projections of vectors from the set $V$ along $K_V,$ so vectors of $U$ are projected into vectors of at most three directions and zonotope $Z(U)$ satisfies conditions of the lemma \ref{2or3} and $Z(U)$ is a parallelohedron.
\end{proof}

\begin{cor}
Projection of space-filling zonotope $Z(P)$ along any of its face is a parallelohedron.\end{cor}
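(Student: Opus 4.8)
The plan is to reduce the general case to Lemma~\ref{projection} by induction on the dimension $k$ of the face along which we project. If $k=0$ the projection is $Z(V)$ itself, a parallelohedron by hypothesis, and if $k=1$ the statement is exactly Lemma~\ref{projection}. So suppose $k\geq 2$ and that the corollary holds for all faces of dimension less than $k$ of every space-filling zonotope.

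Let $F$ be a $k$-dimensional face of the space-filling zonotope $Z(V)$. By Corollary~\ref{faces2} there is a subset $U\subseteq V$ with $(\lin U)\cap V=U$ and $\dim\lin U=k$ such that $F$ is a translate of $Z(U)$, and projecting $Z(V)$ along $F$ means projecting along the subspace $\lin U$. Fix a vector $\ib v\in U$; it is a translate of an edge $e$ of $Z(V)$ lying in $F$. Let $p$ be the projection along $\ib v$ and put $U'=p(V)$. By Lemma~\ref{projection} the zonotope $p(Z(V))=Z(U')$ is a parallelohedron; being the projection of a zonotope it is itself a zonotope, hence a space-filling zonotope, of dimension $d-1$.

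Next I would verify that $\bar U:=p(U)$ determines a face $F'$ of $Z(U')$ of dimension $k-1$ to which the inductive hypothesis applies. Since $p$ kills only $\lin\{\ib v\}\subseteq\lin U$, we have $\dim\lin\bar U=\dim p(\lin U)=k-1$. If $\bar w\in U'\cap\lin\bar U$ with $w\in V$, then $\bar w=p(w')$ for some $w'\in\lin U$, so $w-w'\in\lin\{\ib v\}\subseteq\lin U$, whence $w\in(\lin U)\cap V=U$ and $\bar w\in\bar U$; thus $(\lin\bar U)\cap U'=\bar U$, and Corollary~\ref{faces2} gives that $\bar U$ determines a $(k-1)$-dimensional face $F'$ of $Z(U')$ (a translate of $Z(\bar U)$). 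Because $\lin\bar U=p(\lin U)$, projecting $Z(V)$ along $\lin U$ is the same, up to affine equivalence, as first applying $p$ and then projecting $Z(U')$ along $\lin\bar U$, that is, projecting $Z(U')$ along the face $F'$. By the inductive hypothesis the latter is a parallelohedron, so the induction goes through and the corollary follows.

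The one place that needs care is the middle of the argument: one must check that after a single edge-projection the chosen face $F$ descends to a genuine face $F'$ of one lower dimension of the new, still space-filling, zonotope --- this is where the relations $(\lin\bar U)\cap U'=\bar U$ and $\lin\bar U=p(\lin U)$ are used --- so that Lemma~\ref{projection} may be iterated; with these in hand the rest is routine.
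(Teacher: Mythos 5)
Your argument is correct and is essentially the intended one: the paper states this corollary without proof immediately after Lemma~\ref{projection}, the evident reasoning being to iterate that lemma edge by edge, which is exactly your induction on the dimension of the face. Your verification that the face descends to a $(k-1)$-dimensional face of the still space-filling projected zonotope (via Corollary~\ref{faces2} and the relation $(\lin\bar U)\cap U'=\bar U$) correctly fills in the bookkeeping the paper leaves implicit.
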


If set $V$ determines a zonotope $Z(V)$ and if we will remove a vector $\ib v$ from the set $V$ and $\dim(V\setminus\{\ib v\})<\dim V$ then the zonotope $Z(V\setminus\{\ib v\})$ is a projection of $Z(V)$ along the vector $\ib v.$ Therefore from lemmas \ref{setminus} and \ref{projection} we obtain the following corollary.

\begin{cor}
If zonotope $Z(V)$ is a parallelohedron and $U\subseteq V$ then $Z(U)$ is a parallelohedron too.\end{cor}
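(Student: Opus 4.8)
The plan is to reduce the statement to removing a single generator and then to iterate. It is enough to establish the one-step fact: if $Z(W)$ is a parallelohedron and $\ib w\in W$, then $Z(W\setminus\{\ib w\})$ is a parallelohedron. Granting this, write $V\setminus U=\{\ib w_1,\dots,\ib w_m\}$ and set $V_0=V$, $V_j=V_{j-1}\setminus\{\ib w_j\}$; since each $\ib w_j$ still belongs to $V_{j-1}$, the one-step fact applied $m$ times produces a chain $Z(V)=Z(V_0),Z(V_1),\dots,Z(V_m)=Z(U)$ of parallelohedra, and in particular $Z(U)$ is one.

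To prove the one-step fact, put $W'=W\setminus\{\ib w\}$ and split into two cases according to whether deleting $\ib w$ changes the dimension of the span. If $\dim\lin W'=\dim\lin W$, then $Z(W')$ is a zonotope of the same dimension as $Z(W)$, and Lemma \ref{setminus}, applied with generating set $W'$ and extra vector $\ib w$, directly yields that $Z(W')$ is a parallelohedron. If instead $\dim\lin W'=\dim\lin W-1$, then $\ib w\notin\lin W'$, so no other generator of $W$ is a scalar multiple of $\ib w$ and hence $(\lin\{\ib w\})\cap W=\{\ib w\}$; Corollary \ref{faces2} then applies to the singleton $\{\ib w\}$ and shows that $\ib w$ is a translate of an edge $e$ of $Z(W)$. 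Because the dimension drops when $\ib w$ is removed, $Z(W')$ coincides with the projection of $Z(W)$ along $e$, which is a parallelohedron by Lemma \ref{projection}. Either way $Z(W')$ is a parallelohedron, which is what we wanted.

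The one point that really needs attention is the dichotomy in the one-step argument. First, deleting one generator can lower the dimension of the linear span by at most one, so the two cases above are exhaustive. Second --- and this is the place where things could in principle go wrong --- in the dimension-dropping case one must check that $\ib w$ genuinely corresponds to an edge of $Z(W)$; the key observation is that the very fact that the dimension drops forces $(\lin\{\ib w\})\cap W=\{\ib w\}$, ruling out the only obstruction (a longer parallel family of generators containing $\ib w$) to invoking Corollary \ref{faces2}. Once this is seen, the rest is bookkeeping along the chain $V\supseteq V_1\supseteq\dots\supseteq U$.
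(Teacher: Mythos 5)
Your proof is correct and follows essentially the same route as the paper: the paper likewise removes one generator at a time, invoking Lemma \ref{setminus} when the span's dimension is preserved and observing that $Z(V\setminus\{\ib v\})$ is the projection of $Z(V)$ along $\ib v$ (hence a parallelohedron by Lemma \ref{projection}) when it drops. Your additional check via Corollary \ref{faces2} that $\ib w$ really is an edge in the dimension-dropping case is a detail the paper leaves implicit, but it is the right justification.
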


\begin{lem}
Let zonotope $Z(U)$ is a projection of zonotope $Z(V)$ along some face $Z(W)$ for some subset $W$ of $V.$ For every face $F_U$ of codimension $k$ of the zonotope $Z(U)$ there exists a unique face $F_V$ of codimension $k$ of the zonotope $Z(V)$ such that projection of $F_V$ along $Z(W)$ is $F_U.$ Moreover is a face $G_U$ is incident to $F_U$ then $G_V$ is also incident to $F_V.$
\end{lem}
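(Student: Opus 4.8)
The plan is to make the correspondence $F_U\mapsto F_V$ completely explicit: $F_V$ will be the preimage of $F_U$ under the projection restricted to $Z(V)$, and then existence, uniqueness and the incidence statement become almost formal, with Lemma~\ref{faces} used only for a dimension count. So first I would fix notation: let $m=\dim\lin W$, let $p\colon\mathbb{R}^d\to\mathbb{R}^{d-m}$ be the linear projection with $\ker p=\lin W$ for which $Z(U)=p(Z(V))$, and recall that $U$ is the set of non-zero vectors among $p(\mathbf v)$, $\mathbf v\in V\setminus W$. The key preliminary observation is that supporting hyperplanes pull back well: if $\pi$ is a supporting hyperplane of $Z(U)$, then $p^{-1}(\pi)$ is a hyperplane of $\mathbb{R}^d$ (codimension $1$, containing $\ker p$); it supports $Z(V)$, because $p$ maps the two open half-spaces bounded by $p^{-1}(\pi)$ onto those bounded by $\pi$ while $p(Z(V))=Z(U)$ lies in one closed half-space of $\pi$, and $p^{-1}(\pi)\cap Z(V)\neq\emptyset$ since $p$ maps $Z(V)$ onto $Z(U)$. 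Using that $p(\mathbf y)\in Z(U)$ for every $\mathbf y\in Z(V)$, this yields the formula
$$F^{Z(V)}_{p^{-1}(\pi)}=Z(V)\cap p^{-1}(\pi)=\{\mathbf y\in Z(V):p(\mathbf y)\in F^{Z(U)}_\pi\}.$$

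For existence, given a face $F_U$ of codimension $k\geq 1$, I would choose a supporting hyperplane $\pi$ of $Z(U)$ with $F_U=F^{Z(U)}_\pi$ and set $F_V:=F^{Z(V)}_{p^{-1}(\pi)}$; the displayed formula gives $p(F_V)=F_U$. To see $\operatorname{codim}F_V=k$, apply Lemma~\ref{faces} on both sides: $F_V$ is a translate of $Z(S)$ with $S=W\cup\{\mathbf v\in V\setminus W:p(\mathbf v)\parallel\pi\}$, and $F_U$ is a translate of $Z(U_\pi)$ with $U_\pi=\{u\in U:u\parallel\pi\}$. Since every vector of $U_\pi$ is $p(\mathbf v)$ for some $\mathbf v\in V\setminus W$ (as $U=p(V\setminus W)\setminus\{\mathbf 0\}$), one has $p(S)=\{\mathbf 0\}\cup U_\pi$, hence $p(\lin S)=\lin U_\pi$; combined with $\ker p=\lin W\subseteq\lin S$, rank--nullity gives $\dim\lin S=m+\dim\lin U_\pi=m+\bigl((d-m)-k\bigr)=d-k$, so $\dim F_V=d-k$. (The case $k=0$ is immediate, with $F_V=Z(V)$.) For uniqueness, let $F_V'$ be any codimension-$k$ face of $Z(V)$ with $p(F_V')=F_U$. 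Every $\mathbf y\in F_V'$ satisfies $p(\mathbf y)\in F_U$, so $F_V'\subseteq F_V$ by the preimage description of $F_V$; since $F_V'$ and $F_V$ are both faces of $Z(V)$ of the same dimension $d-k$ with $F_V'\subseteq F_V$, they have the same affine hull, whence $F_V'=Z(V)\cap\operatorname{aff}F_V'=Z(V)\cap\operatorname{aff}F_V=F_V$.

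Finally, the incidence statement is immediate from the explicit description. Applying the construction also to a face $G_U$ incident to $F_U$ produces $G_V=\{\mathbf y\in Z(V):p(\mathbf y)\in G_U\}$; so if $F_U\subseteq G_U$ then $F_V=\{\mathbf y\in Z(V):p(\mathbf y)\in F_U\}\subseteq\{\mathbf y\in Z(V):p(\mathbf y)\in G_U\}=G_V$, and if instead $G_U\subseteq F_U$ one gets the reverse inclusion; either way $F_V$ and $G_V$ are incident. I expect the only step needing genuine care to be the dimension count $\operatorname{codim}F_V=k$ — specifically the identification of the generating set $U_\pi$ of $F_U$ with $\{p(\mathbf v):\mathbf v\in V\setminus W,\ p(\mathbf v)\parallel\pi\}$, which is exactly what makes $p(\lin S)=\lin U_\pi$ hold and lets rank--nullity transport the codimension across the projection. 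Everything else (that $p^{-1}(\pi)$ supports $Z(V)$, the preimage formula, uniqueness, incidence) is purely formal and uses no feature of zonotopes beyond Lemma~\ref{faces}.
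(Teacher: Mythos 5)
Your construction of $F_V$ as the face of $Z(V)$ cut out by the lifted supporting hyperplane $p^{-1}(\pi)$ is exactly the paper's $\pi'$ (the hyperplane generated by $\pi$ and the vectors of $W$), so this is essentially the same argument; you simply work out in detail the dimension count, the uniqueness, and the incidence preservation, which the paper only asserts. The proof is correct.
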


\begin{proof}
The face $F_U$ is a zonotope $Z(U_1)$ for some subset $U_1\subseteq U.$ Consider a supporting plane $\pi$ correspondent to face $F_U$ of the zonotope $Z(U).$ For this plane we can find a supporting plane $\pi'$ of the zonotope $Z(V)$ such that $\pi$ is generated by plane $\pi$ and vectors of the set $W.$ A vector from $V$ is parallel to $\pi'$ if and only if it is in $W$ or it is projected into vector of the set $U_1.$ The face of $Z(V)$ determined by the hyperplane $\pi'$ is a face of codimension $k$ and its projection is the face $Z(U_1)=F_U,$ so the desired face $F_V$ has been constructed. The uniqueness of $F_V$ follows from the construction.

The preserving of the incidence relation with the construction of face $F_V$ and $G_V$ as described above is also evident because faces $F_U=Z(U_1)$ and $G_U=Z(U_2)$ are incident if and only if one of subsets $U_1$ and $U_2$ is a subset of another.
\end{proof}

\begin{cor}\label{projectedpath}
Let zonotope $Z(U)$ is a projection of zonotope $Z(V)$ along face $Z(W).$ For every belt path $\Gamma_U$ in zonotope $Z(U)$ there is a belt path $\Gamma_V$ in $Z(V)$ of the same length. If $\Gamma_U$ connects facets $F_U$ and $G_U$ of the zonotope $Z(U)$ then $\Gamma_V$ connects facets $F_V$ and $G_V$ of the zonotope $Z(V).$
\end{cor}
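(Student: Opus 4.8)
The plan is to push the belt path $\Gamma_U$ upward through the face-lifting supplied by the previous lemma. Write $\Gamma_U=F_0^U,\mathcal{B}_1^U,F_1^U,\dots,\mathcal{B}_n^U,F_n^U$, and for each $i$ fix a ridge $R_i^U$ of $Z(U)$ generating the belt $\mathcal{B}_i^U$ (that is, $\mathcal{B}_i^U$ is the set of facets of $Z(U)$ parallel to $R_i^U$). By the previous lemma, lift every facet $F_j^U$ to a facet $F_j^V$ of $Z(V)$ and every ridge $R_i^U$ to a ridge $R_i^V$ of $Z(V)$ --- codimension is preserved, so facets indeed go to facets and ridges to ridges --- and let $\mathcal{B}_i^V$ be the belt of $Z(V)$ generated by $R_i^V$. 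Set $\Gamma_V:=F_0^V,\mathcal{B}_1^V,F_1^V,\dots,\mathcal{B}_n^V,F_n^V$. This has length $n$, and by the construction of the lemma $F_0^V=F_V$ and $F_n^V=G_V$; so it remains only to check that $\Gamma_V$ is an honest belt path, i.e.\ that $F_{i-1}^V$ and $F_i^V$ both lie in $\mathcal{B}_i^V$ for each $i$ (the condition that $F_i^V$ lies in both $\mathcal{B}_i^V$ and $\mathcal{B}_{i+1}^V$ is the same assertion read at two consecutive indices).

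The incidence-preservation of the previous lemma is not enough for this: a facet lying in a belt need not be incident to the generating ridge (in a hexagonal belt only two of the six facets are), so I would argue at the level of generating vector-subsets. By Corollary \ref{faces2}, a facet of $Z(U)$ is $Z(U_1)$ for a subset with $U_1=(\lin U_1)\cap U$ and $\dim\lin U_1=\dim Z(U)-1$, and a ridge is $Z(K)$ for a subset with $K=(\lin K)\cap U$ and $\dim\lin K=\dim Z(U)-2$; the facet $Z(U_1)$ lies in the belt of the ridge $Z(K)$ exactly when $\lin K\subseteq\lin U_1$ (projecting $Z(U_1)$ along $\lin K$ is then an edge), and since both $K$ and $U_1$ arise as intersections of linear subspaces with $U$, this is in turn equivalent to $K\subseteq U_1$. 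Recall also, from the proof of the previous lemma, that the lift of a face $Z(S)$ with $S\subseteq U$ is $Z(S_V)$, where $S_V$ is the set of vectors of $V$ that either lie in $W$ or project, along $\lin W$, into $S$; in particular $S\mapsto S_V$ is monotone. Hence if $Z(U_1)$ lies in the belt of $Z(K)$, then $K\subseteq U_1$, so $K_V\subseteq(U_1)_V$, so $\lin K_V\subseteq\lin(U_1)_V$, and by the same criterion applied inside $Z(V)$ the lifted facet $Z((U_1)_V)$ lies in the belt of the lifted ridge $Z(K_V)$. Applying this to the pair $(F_{i-1}^U,R_i^U)$ and then to the pair $(F_i^U,R_i^U)$ yields $F_{i-1}^V,F_i^V\in\mathcal{B}_i^V$, so $\Gamma_V$ is a belt path, and it connects $F_V$ to $G_V$.

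I expect the only delicate point to be this last claim, that being parallel to a ridge is preserved by the lifting. Its two ingredients are: (i) rephrasing ``$Z(U_1)$ lies in the belt of $Z(K)$'' as the inclusion of linear spans $\lin K\subseteq\lin U_1$, valid in any zonotope; and (ii) reading off from the previous lemma's proof that the lifting adjoins the set $W$ to each generating vector-subset and hence preserves inclusions. Everything else is bookkeeping; in particular one never has to reconcile different ridges generating the same belt, since one fixed generating ridge per belt suffices.
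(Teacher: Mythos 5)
Your proof is correct. The paper states this corollary without any proof, treating it as immediate from the preceding lifting lemma, so there is no written argument to compare against; your derivation proceeds from that same lemma and supplies exactly the one point the paper leaves implicit, namely that the lemma's incidence preservation is not literally enough (a facet of a belt need not contain the generating ridge) and that belt membership should instead be read as the inclusion $\lin K\subseteq\lin U_1$ of generating subspaces, which the lifting $S\mapsto S_V$ from the lemma's proof visibly preserves because it is monotone. Both the criterion in (i) and the monotonicity in (ii) check out against Corollary \ref{faces2} and the construction in the lemma's proof, so the argument is complete.
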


\section{Conjugated zonotopes.}

\begin{defin}
Let $E=\{\ib e_1,\ldots,\ib e_{d-1}\}$ and $F=\{\ib f_1,\ldots,\ib f_{d-1}\}$ be two sets of vectors in $\mathbb{R}^d.$ We will say that sets $E$ and $F$ are {\it conjugated} if for every $1\leq i\leq d-1$ we have the following equality for dimensions $\dim(E\cup\{\ib f_i\})=\dim (F\cup\{\ib e_i\})=d.$ The correspondent zonotope $Z(E\cup F)$ we will also call a {\it conjugated zonotope}.
\end{defin}

\begin{thm}\label{goodbasis}
Let $E$ and $F$ be two conjugated vector sets in $\mathbb{R}^d, d>2$ such that zonotope $Z(E\cup F)$ is a parallelohedron. There exists a space-filling zonotope $Z(V)$ combinatorially equivalent to the zonotope $Z(E\cup F)$ such that matrix $V$ with columns consist of coordinates of vectors from $E\cup F$ is the following:
$$V=\left(\begin{array}{c|c}
E_{d-1}&A\\\hline
0 \ldots 0& 1 \ldots 1
\end{array}
\right),$$
Here $E_{d-1}$ is unit $(d-1)\times(d-1)$-matrix and $A$ is a $0/1$-matrix of a size $(d-1)\times(d-1),$ i.e. entries of the matrix $A$ are zeros ore ones, moreover we can consider that at least half of entries in each row of the matrix $A$ are zeros.
\end{thm}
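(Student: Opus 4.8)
The plan is to bring $Z(E\cup F)$ to the stated form by a finite sequence of invertible linear transformations of $\mathbb{R}^d$ together with rescalings of individual generators, noting that at every step the space-filling property is preserved (trivially under linear maps, and under generator rescalings by Lemma~\ref{alpha}) and the combinatorial type is preserved (linear maps are clear; rescaling one generator of a zonotope by a nonzero scalar does not change its combinatorial type, the underlying oriented matroid being unchanged up to reorientation). First I would note that conjugacy forces $E$ to be linearly independent: if $\dim\lin E\le d-2$ then $\dim(E\cup\{\mathbf f_1\})\le d-1<d$, contradicting the definition of conjugated sets; the same holds for $F$. Hence $\lin E$ is a hyperplane containing no $\mathbf f_i$, and $\{\mathbf e_1,\ldots,\mathbf e_{d-1},\mathbf f_1\}$ is a basis of $\mathbb{R}^d$. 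Applying the linear map sending $\mathbf e_i$ to the $i$-th standard basis vector for $i\le d-1$ and $\mathbf f_1$ to the $d$-th one, we get $\lin E=\{x_d=0\}$, so every $\mathbf f_k$ has nonzero last coordinate; rescaling $\mathbf f_k$ by its reciprocal (Lemma~\ref{alpha}) we may assume each $\mathbf f_k$ has last coordinate $1$. The coordinate matrix of $E\cup F$ is then $\left(\begin{smallmatrix}E_{d-1}&B\\0\ldots0&1\ldots1\end{smallmatrix}\right)$, where the $i$-th column of $B$ holds the first $d-1$ coordinates of $\mathbf f_i$ and the column of $\mathbf f_1$ is zero.

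The core of the argument is to show $B$ may be taken to be a $0/1$-matrix. For a fixed $i\in\{1,\ldots,d-1\}$ I would apply Lemma~\ref{2or3} to the subset $U=E\setminus\{\mathbf e_i\}$, which has dimension exactly $d-2$ because $E$ is independent. Projecting along $\lin U$ kills all coordinates except the $i$-th and the $d$-th, so the vectors of $E\cup F$ map to $\mathbf 0$ (for $\mathbf e_j$, $j\ne i$), to the image of the $i$-th basis vector (for $\mathbf e_i$), and to the vector with coordinates $(B_{ik},1)$ (for $\mathbf f_k$). Two of the last kind are proportional exactly when the corresponding entries of $B$ are equal, and none of them is proportional to the image of $\mathbf e_i$; therefore the ``at most three directions'' conclusion of Lemma~\ref{2or3} says precisely that the $i$-th row of $B$ contains at most two distinct values, one of which is $0$ because of the zero column of $\mathbf f_1$. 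Writing that value set as $\{0,\beta_i\}$, I would then, for each $i$ with $\beta_i\ne0$, scale the $i$-th coordinate by $1/\beta_i$ and rescale the generator $\mathbf e_i$ by $\beta_i$ to compensate; this turns the $i$-th row of $B$ into a $0/1$-row and changes nothing else (the vectors $\mathbf e_j$ with $j\ne i$ and the last row of the matrix are untouched, and these scalings for different $i$ act on different coordinates and so commute). This gives $V=\left(\begin{smallmatrix}E_{d-1}&A\\0\ldots0&1\ldots1\end{smallmatrix}\right)$ with $A$ a $0/1$-matrix.

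It remains to force each row of $A$ to have at least as many zeros as ones. For every row $i$ currently having more ones than zeros I would apply the shear $x_i\mapsto x_d-x_i$ (leaving all other coordinates fixed) and rescale $\mathbf e_i$ by $-1$: the $i$-th coordinate of $\mathbf f_k$ becomes $1-A_{ik}$, so the $i$-th row of $A$ is replaced by its complement, while every entry outside this row and the whole last row of $V$ stay put, so the normal form is preserved. The shears for distinct $i$ commute and do not interfere because each only reads the $d$-th coordinate, which no shear ever modifies. After this, every row of $A$ has at least half of its entries equal to $0$, and the resulting $Z(V)$ — obtained from $Z(E\cup F)$ by invertible linear maps and nonzero rescalings of generators — is a space-filling zonotope combinatorially equivalent to $Z(E\cup F)$ and of the required shape.

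I expect essentially all of the content to lie in the core step: the right move is to probe the parallelohedron condition through the codimension-two faces associated with the subsets $U=E\setminus\{\mathbf e_i\}$, for which independence of $E$ (a consequence of conjugacy) is exactly what makes $\dim U=d-2$ and the projection two-dimensional in coordinates $(i,d)$, and then to read off from Lemma~\ref{2or3} that each row of the coordinate matrix of the $\mathbf f_i$'s takes only two values. The remaining reductions — scaling a coordinate, one shear per row, and rescaling each $\mathbf e_i$ — are routine linear algebra; the one point that must be handled with some care is the invariance of the combinatorial type of a zonotope under rescaling a single generator.
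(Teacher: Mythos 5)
Your proposal is correct and follows essentially the same route as the paper: both probe the parallelohedron condition via Lemma~\ref{2or3} applied to the ridges $E\setminus\{\ib e_i\}$ to conclude that each row of the coordinate matrix of $F$ takes at most two values, then normalize by basis changes and generator rescalings (Lemma~\ref{alpha}), and finish by complementing heavy rows with the shear $x_i\mapsto x_d-x_i$ combined with negating $\ib e_i$. The only (harmless) deviation is that you complete the basis with $\ib f_1$ rather than an auxiliary vector $\ib e_d$, which pins one of the two row values to $0$ outright and lets you skip the paper's affine shift $\ib g_d'=\ib g_d+a_1\ib g_1+\ldots+a_{d-1}\ib g_{d-1}$.
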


\begin{proof}
Note that the following transformation of the vector set does not change combinatorial type of the correspondent zonotope and does not change its property to be or to be a parallelohedron:

\begin{itemize}
\item non degenerated affine transformation of the vector set;

\item change of the basis of the comprehensive space (actually this transformation changes only coordinates of vectors but does not change vectors of the set);

\item multiplication one of vectors on non-zero constant.
\end{itemize}

The first two transformations are evidently satisfies the mentioned property. The third transformation satisfies the second part of the property because of lemma \ref{alpha} and the first part of the property because of lemma \ref{faces2}. Also it is clear that after any of described transformations sets $E$ and $F$ remains conjugated.

We will apply several transformations to sets $E$ and $F$ in order to obtain a set $V$ with the desired property; vectors obtained after some of these transformations we will denote also $\ib e_1,\ldots,\ib e_{d-1},\ib f_1,\ldots,\ib f_{d-1}$ as vectors of initial sets $E$ and $F.$ First we will consider a basis $\ib g_1,\ldots,\ib g_d$ such that first $d-1$ its vectors are vectors of the set $E$ and the last vector is some vector $\ib e_d.$ In this basis any vector $\ib f_i$ has non-zero last coordinate because dimension of the vector set $E\cup \{\ib f_i\}$ is equal to $d.$ Let multiply every vector $\ib f_i$ on a constant $\alpha_i$in order to make the last coordinate of the new vector $\ib f_i$ equal to $1.$

Let's apply lemma \ref{2or3} to the space-filling zonotope $Z(E\cup F)$ and its ridge generated by the vector set $E\setminus\{\ib e_i\};$ here we will consider a projection on the plane generated by vectors $\ib e_i$ and $\ib e_d.$ The vector $\ib f_j$ could not be projected in the vector collinear to $\ib e_i$ because in that case vector set $E\cup\{\ib f_j\}$ has dimension $d-1$ and this contradict to conjugacy of vector sets $E$ and $F.$ Therefore vectors of the set $F$ projects on vectors of one or two directions. Assume that vector $\ib f_j$ projects on vector $c_j\ib e_i+\ib e_d$ and $c_j$ here is a $j$-th coordinate of the vector $\ib f_j.$ Two vector of projections of $\ib f_j$ and $\ib f_k$ are collinear if and only if $c_j=c_k$ so between all $c_j$'s there are at most two different. All $c_j$'s could not be equal because in that case all vector from $F$ lies in the hyperplane $c_jx_d=x_i$ so as all vectors from $E$ except $\ib e_i$ and this statement contradicts with the conjugacy of $E$ and $F$ if $d>2.$ Denote the pair of values for $i$-th coordinate of vectors $\ib f_j$ as $a_i$ and $b_i$ with $a_i<b_i.$

We will multiply every vector $\ib e_i$ on the number $b_i-a_i$ and consider all vectors from $E\cup F$ in the new basis $\ib g_i'=(b_i-a_i)\ib g_i$ if $1\leq i\leq d-1$ and $\ib g_d'=g_d+a_1\ib g_1+\ldots+a_{d-1}\ib g_{d-1}.$ In this basis new vectors from $E$ are the first $d-1$ basic vectors and every vector from $F$ has coordinates $0$ or $1$ because it is a sum of $\ib g_d'$ with some vectors $\ib g_i'$ (namely vectors correspondent to $b_i$'s in the representation of the $\ib f_i$ in the old basis). Hence in the basis $\ib g_1',\ldots, \ib g_d'$ the vector set $E\cup F$ is represented by matrix
$$E\cup F=\left(\begin{array}{c|c}
E_{d-1}&A'\\\hline
0 \ldots 0& 1 \ldots 1
\end{array}
\right)$$
for a $0/1$-matrix $A'.$

Assume that there are less than one half of zeros in the $i$-th row of the matrix $A'.$ Then we will multiply the vector $\ib e_i$ on $-1$ and change the vector $\ib g_i'$ on the vector $\ib g_i''=-\ib g_i'$ and the vector $\ib g_d'$ on the vector $\ib g_d''=\ib g_d'+\ib g_i'.$ After applying this operation for every $i$ we will obtain the desired matrix for the set $E\cup F.$
\end{proof}

\section{Main results.}

\begin{thm}\label{conjugate}
Let $P$ and $Q$ be two facets of $d$-dimensional zonotope $Z(V).$ There exists a conjugated zonotope $Z(E\cup F)$ of dimension at most $d$ such that the belt distance between $P$ and $Q$ in $Z(V)$ is not greater than the belt distance between facets determined by conjugated sets $E$ and $F$ in $Z(E\cup F).$
\end{thm}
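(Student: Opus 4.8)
The plan is to turn $Z(V)$ into a conjugated zonotope of dimension at most $d$ by a chain of projections and deletions of superfluous generators, arranging that at each step the belt distance between the facets playing the roles of $P$ and $Q$ does not decrease; then $d_\mathcal{B}^{Z(V)}(P,Q)$ is bounded by the corresponding belt distance in the final conjugated zonotope. To start, I would fix supporting hyperplanes $\pi_P,\pi_Q$ of $P,Q$ and let $U_P=V_{\pi_P}$, $U_Q=V_{\pi_Q}$ be the sets of generators parallel to them. By Lemma \ref{faces} and Corollary \ref{faces2}, $P$ and $Q$ are translates of $Z(U_P)$ and $Z(U_Q)$ and $\dim\lin U_P=\dim\lin U_Q=d-1$. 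If $H_P:=\lin U_P$ and $H_Q:=\lin U_Q$ coincide, then $P$ and $Q$ are opposite, $d_\mathcal{B}^{Z(V)}(P,Q)=0$, and any parallelogram (a $2$-dimensional conjugated zonotope) does the job; so assume $H_P\ne H_Q$ and hence $\dim(H_P\cap H_Q)=d-2$.

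\emph{Projection.} The set $W:=U_P\cap U_Q$ lies in $H_P\cap H_Q$, and $(\lin W)\cap V=W$ (a generator inside $\lin W$ lies in $H_P\cap H_Q$, hence in $U_P\cap U_Q=W$), so by Corollary \ref{faces2} it determines a face $Z(W)$. Project $Z(V)$ along $Z(W)$; as $\lin W$ lies in both $H_P$ and $H_Q$, the lemma preceding Corollary \ref{projectedpath} shows that $P,Q$ are the lifts of facets $\bar P,\bar Q$ of the projection $Z(\bar V)$, and Corollary \ref{projectedpath} gives $d_\mathcal{B}^{Z(V)}(P,Q)\le d_\mathcal{B}^{Z(\bar V)}(\bar P,\bar Q)$. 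Set $m:=\dim Z(\bar V)=d-\dim\lin W\le d$, and let $\bar H_P,\bar H_Q,\bar U_P,\bar U_Q$ be the images of $H_P,H_Q,U_P,U_Q$; then $\bar U_P,\bar U_Q$ determine $\bar P,\bar Q$. A generator of $\bar V$ lying in $\bar H_P\cap\bar H_Q$ pulls back into $(H_P+\lin W)\cap(H_Q+\lin W)=H_P\cap H_Q$, hence into $V\cap(H_P\cap H_Q)=W$, hence projects to $\mathbf 0$; so $\bar U_P\cap\bar H_Q=\bar U_Q\cap\bar H_P=\emptyset$, and in particular $\bar U_P,\bar U_Q$ are disjoint and $\lin(\bar U_P\cup\bar U_Q)=\bar H_P+\bar H_Q=\mathbb{R}^m$.

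\emph{Deletion.} The main tool now is a sublemma analogous to Corollary \ref{projectedpath}: if $\mathbf u$ is a generator of $Z(V')$ with $\dim Z(V'\setminus\{\mathbf u\})=\dim Z(V')$, then every belt path of $Z(V'\setminus\{\mathbf u\})$ lifts to a belt path of the same length of $Z(V')$, a facet $Z(T)$ of $Z(V'\setminus\{\mathbf u\})$ lifting to $Z(T)$ if $\mathbf u\notin\lin T$ and to $Z(T\cup\{\mathbf u\})$ otherwise; it is proved like the lemma before Corollary \ref{projectedpath}, the crucial point being that adjoining $\mathbf u$ can only enlarge a belt and never shrinks one, so belt-path incidences survive. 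Since $\lin(\bar U_P\cup\bar U_Q)=\mathbb{R}^m$, every generator of $\bar V$ outside $\bar U_P\cup\bar U_Q$ is non-essential, and whenever $|\bar U_P|>m-1$ some generator of $\bar U_P$ is redundant inside $\bar U_P$, hence non-essential in $Z(\bar V)$, and likewise for $\bar U_Q$; deleting such generators one at a time keeps $\bar H_P,\bar H_Q$ (so $\bar P,\bar Q$ stay well defined as the facets they cut out) and, by the sublemma, does not decrease the belt distance between them. We end with a generating set $E\cup F$ where $E:=\bar U_P$ and $F:=\bar U_Q$ have $m-1$ elements each, span the distinct hyperplanes $\bar H_P,\bar H_Q$, and satisfy $E\cap\bar H_Q=F\cap\bar H_P=\emptyset$. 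Then $\dim(E\cup\{\mathbf f_i\})=\dim(F\cup\{\mathbf e_i\})=m$ for all $i$, so $E$ and $F$ are conjugated, $Z(E\cup F)$ has dimension $m\le d$, $\bar P$ and $\bar Q$ are the facets determined by $E$ and $F$, and chaining all the inequalities yields $d_\mathcal{B}^{Z(V)}(P,Q)\le d_\mathcal{B}^{Z(E\cup F)}(\bar P,\bar Q)$. (If $Z(V)$ is a parallelohedron then so is $Z(E\cup F)$, by the corollary after Lemma \ref{projection} and by Lemma \ref{setminus}.)

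I expect the only real work to be the deletion sublemma — settling on the correct facet/belt correspondence and checking that a belt of $Z(V'\setminus\{\mathbf u\})$ is contained in the corresponding belt of $Z(V')$; the remaining steps are dimension bookkeeping built on Corollary \ref{faces2} and Corollary \ref{projectedpath}.
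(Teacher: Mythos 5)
Your proposal is correct and follows essentially the same route as the paper: reduce to the case where $P$ and $Q$ share no generator directions (via Corollary \ref{projectedpath}), then pass to the conjugated sub-zonotope spanned by bases of the two facet hyperplanes and use the monotonicity that a belt path in a sub-zonotope lifts to one of the same length when generators are added back. The only cosmetic differences are that the paper performs that comparison in one shot --- setting $V_i=V\cap\lin(U_i)$ for each facet $Z(U_i)$ of a belt path in $Z(E\cup F)$ and noting that consecutive $V_i$ still meet in dimension $d-2$ --- rather than through your one-generator-at-a-time deletion sublemma, and it handles common edge directions by induction on single edges rather than by projecting along the whole common face at once.
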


\begin{proof}
We will prove the statement by induction; the basis of induction $d=2$ is evident. Assume that the theorem is true for every dimension less than $d.$

If there is a vector $\ib v\in V$ such that both facets $P$ and $Q$ has edges parallel to $\ib v$ then project the zonotope $Z(V)$ along the edge $\ib v.$ Let $P'$ and $Q'$ are facets of projection $Z(V')$ correspondent to $P$ and $Q.$ By the corollary \ref{projectedpath} we have the inequality $d_\mathcal{B}^{Z(V)}(P,Q)\leq d_\mathcal{B}^{Z(V')}(P',Q').$ So it is sufficient to apply the induction assumption to the zonotope $Z(V')$ and facets $P'$ and $Q'.$

Consider that $P$ and $Q$ does not have parallel edges. The facet $P$ is a $(d-1)$-dimensional zonotope $P=Z(U)$ for some $(d-1)$-dimensional vector set $U\subset V.$ Let's choose an arbitrary basis $E$ in the set $U$ consist of vectors of the set $V.$ In the same way we can construct $(d-1)$-dimensional linearly independent set $F\subset V$ correspondent to the facet $Q.$ We will show that sets $E$ and $F$ are conjugate; it is enough to show that an arbitrary vector $\ib f_i\in F$ is linearly independent with vectors from $E.$ If it is not so then $\ib f_i$ is parallel to hyperplanes of both facets $P$ and $Q$ and this case is already done before.

We will show that zonotope $Z(E\cup F)$ is the zonotope from the statement of the theorem. For every belt path $\Gamma$ connecting facets $Z(E)$ and $Z(F)$ in $Z(E\cup F)$ we will construct a belt path of the same length connecting facets $P$ and $Q$ in $Z(V).$ Consider $\Gamma$ contains facets $Z(E)=Z(U_0),Z(U_1),\ldots,Z(U_n)=Z(F)$ with $U_i\subset E\cup F$ and $\dim(U_i\cap U_{i+1})=d-2$ because adjacent facets in belt path lies in one belt. Denote $V_i=V\cap\lin(U_i)$ and consider facets $Z(V_i)$ of the zonotope $Z(V),$ these faces are exactly facets because of corollary \ref{faces2}. It is clear that $Z(V_0)=P$ and $Z(V_n)=Q$ and moreover facets $Z(V_i)$ and $Z(V_{i+1})$ has a common ridge because the dimension of intersection of $V_i$ and $V_{i+1}$ is equal to $d-2.$ So we have constructed a desired belt path on $Z(V)$ and proved the theorem.
\end{proof}

\begin{notat}
Denote as $\xi(d)$ the maximal belt diameter of $d$-dimensional conjugated space-filling zonotope.
\end{notat}

\begin{cor}\label{maxdiam}
The belt diameter of {\bf any} $d$-dimensional space-filling zonotope is not greater than ${\displaystyle \max_{2\leq i\leq d}\xi(i)}.$
\end{cor}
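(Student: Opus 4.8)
The plan is to read this off directly from Theorem~\ref{conjugate}, using only one additional ingredient: the fact (recorded in the corollary just before Lemma~5 above) that every sub-zonotope of a space-filling zonotope is again space-filling. So there is really nothing to compute; the work has already been done in Theorem~\ref{conjugate}.

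First I would fix a $d$-dimensional space-filling zonotope $Z(V)$ and an arbitrary pair of facets $P,Q$, and recall that the belt diameter of $Z(V)$ is by definition $\max_{P,Q} d_\mathcal{B}^{Z(V)}(P,Q)$. Applying Theorem~\ref{conjugate} to this data produces a conjugated zonotope $Z(E\cup F)$ of some dimension $k$ with $2\le k\le d$ such that
$$d_\mathcal{B}^{Z(V)}(P,Q)\ \le\ d_\mathcal{B}^{Z(E\cup F)}\bigl(Z(E),Z(F)\bigr).$$
The lower bound $k\ge 2$ holds because the recursion in the proof of Theorem~\ref{conjugate} bottoms out either at the base case $\dim=2$ or at a situation where two facets share no parallel edge, which already requires dimension at least $2$; and the upper bound $k\le d$ is explicit in that theorem. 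This is exactly why the bound in the statement is a maximum over all $2\le i\le d$ rather than just $\xi(d)$: the conjugated zonotope furnished by Theorem~\ref{conjugate} may live in a strictly smaller dimension.

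Next I would verify that $Z(E\cup F)$ is a space-filling zonotope, so that the definition of $\xi$ actually applies to it. Inspecting the proof of Theorem~\ref{conjugate}, the conjugated zonotope is obtained from $Z(V)$ by repeatedly projecting along edges—each such projection preserves the parallelohedron property by Lemma~\ref{projection}—and then by taking $Z(E\cup F)$ with $E\cup F$ a subset of the current generating set; the ``sub-zonotope of a parallelohedron is a parallelohedron'' corollary then shows $Z(E\cup F)$ is a $k$-dimensional conjugated space-filling zonotope. Consequently $d_\mathcal{B}^{Z(E\cup F)}(Z(E),Z(F))$ is at most the belt diameter of $Z(E\cup F)$, which is $\le \xi(k)\le \max_{2\le i\le d}\xi(i)$. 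Chaining the inequalities gives $d_\mathcal{B}^{Z(V)}(P,Q)\le \max_{2\le i\le d}\xi(i)$ for every pair $P,Q$, and taking the maximum over all pairs finishes the proof. I do not anticipate a genuine obstacle; the only point deserving explicit mention is the confirmation that $Z(E\cup F)$ inherits the space-filling property, which is immediate from the stated corollaries.
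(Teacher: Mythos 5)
Your proposal is correct and is exactly the argument the paper intends: the corollary is stated without proof as an immediate consequence of Theorem~\ref{conjugate}, and your chain of inequalities $d_\mathcal{B}^{Z(V)}(P,Q)\le d_\mathcal{B}^{Z(E\cup F)}(Z(E),Z(F))\le\xi(k)\le\max_{2\le i\le d}\xi(i)$ is the intended reading. Your one substantive addition --- checking that $Z(E\cup F)$ inherits the space-filling property via Lemma~\ref{projection} and the sub-zonotope corollary, so that $\xi(k)$ actually applies --- is a detail the paper leaves implicit, and you handle it correctly.
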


\begin{cor}
In order to find a $d$-dimensional space-filling zonotope with the maximal possible belt diameter it is sufficient to examine all space-filling conjugated zonotopes od dimension at most $d.$ Moreover it is sufficient to analyze conjugated sets $E$ and $F$ satisfying statement of the theorem $\ref{goodbasis}$, i.e. set of vectors $E\cup F$ can be represented as $d\times (2d-2)$-matrix $$E\cup F=\left(\begin{array}{c|c}
E_{d-1}&A\\\hline
0 \ldots 0& 1 \ldots 1
\end{array}
\right)$$
with $0/1$-matrix $A.$
\end{cor}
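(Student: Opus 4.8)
The plan is to assemble this Corollary from Corollary \ref{maxdiam} and Theorem \ref{goodbasis}; no substantially new argument is needed, only a check that the belt diameter is insensitive to the normalizations made in Theorem \ref{goodbasis}.

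First I would record that the belt diameter of a zonotope is a \emph{combinatorial} quantity: by the Remark following the definition of a belt path it is the diameter of the facet-adjacency graph (equivalently of the Venkov graph), and membership of two facets in a common belt is determined by the face lattice together with the affine directions of the faces. In particular the belt diameter is unchanged under the three operations used in the proof of Theorem \ref{goodbasis} — a non-degenerate linear change of coordinates of $\mathbb{R}^d$, a non-degenerate linear transformation of the generating vector set, and the rescaling of a single generator by a nonzero scalar — since none of them alters the oriented matroid of the generating set nor the directions of the faces, hence none alters the partition of the facets into belts. So the belt diameter is constant along the chain of transformations taking an arbitrary conjugated space-filling zonotope to the normal form of Theorem \ref{goodbasis}.

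Next I would invoke Corollary \ref{maxdiam}: the belt diameter of an arbitrary $d$-dimensional space-filling zonotope is at most $\max_{2\le i\le d}\xi(i)$, and by the very definition of $\xi(i)$ this maximum is the belt diameter of some conjugated space-filling zonotope of some dimension $i$ with $2\le i\le d$. That this value is actually attained by a $d$-dimensional space-filling zonotope I would see as follows: starting from a conjugated space-filling $Z$ of dimension $i$ realizing $\xi(i)$, pass to $Z\times C^{d-i}$; this is again a $d$-dimensional space-filling zonotope (a routine check of the criterion in Lemma \ref{2or3}, or: a direct product of parallelohedra is a parallelohedron), and $Z$ is its projection along the face $\{\mathbf{0}\}\times C^{d-i}$, so by Corollary \ref{projectedpath} a belt path realizing the belt diameter of $Z$ lifts to a belt path of the same length in $Z\times C^{d-i}$, giving belt diameter at least $\xi(i)$. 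Hence the maximal belt diameter among $d$-dimensional space-filling zonotopes equals $\max_{2\le i\le d}\xi(i)$ and is realized by a zonotope obtained from a conjugated space-filling zonotope of dimension at most $d$ — the first assertion.

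Finally I would apply Theorem \ref{goodbasis}: for $d>2$ every conjugated $d$-dimensional space-filling zonotope is combinatorially equivalent, through the normalizations above, to one whose generating set is the matrix
$$\left(\begin{array}{c|c} E_{d-1}&A\\\hline 0 \ldots 0& 1 \ldots 1\end{array}\right)$$
with $A$ a $0/1$-matrix having at least half the entries of each row equal to $0$; and for $d=2$ every parallelohedron has belt diameter $1$, so that case contributes nothing. Combined with the combinatorial invariance from the first step, this shows it suffices to run through these matrices $A$, which is the second assertion. The only point needing care is exactly that first step — the invariance of the belt diameter under rescaling a generator — but since this operation changes neither the face lattice nor the face directions, I do not anticipate a genuine obstacle.
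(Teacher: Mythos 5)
The paper offers no proof of this corollary: it is stated as an immediate combination of Corollary \ref{maxdiam} (which packages Theorem \ref{conjugate}) with Theorem \ref{goodbasis}. Your first and last steps reproduce exactly that reasoning, and your observation that the three normalizing operations preserve the face lattice and the parallelism classes of ridges (hence the belts and the belt diameter) is correct and is implicit in the proof of Theorem \ref{goodbasis}. Up to that point your proposal matches the paper's intent.

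The step that fails is the attainment argument via $Z\times C^{d-i}$. Corollary \ref{projectedpath} lifts a belt path from the projection $Z(U)$ to a belt path of the \emph{same} length in $Z(V)$, which yields $d_\mathcal{B}^{Z(V)}(F_V,G_V)\le d_\mathcal{B}^{Z(U)}(F_U,G_U)$ --- an upper bound on the distance in the larger zonotope, not a lower bound. So lifting a diameter-realizing path of $Z$ into $Z\times C^{d-i}$ says nothing about the belt diameter of the product. In fact the product genuinely collapses the diameter: in the prism $Z\times C^1$ every ridge of the form $F\times\{0\}$, with $F$ a facet of $Z$, generates a four-facet belt consisting of $Z\times\{0\}$, $Z\times\{1\}$, $F\times C^1$ and $(-F)\times C^1$, so every vertical facet is at belt distance $1$ from the top facet and the prism has belt diameter at most $2$ no matter what $Z$ is. Fortunately this attainment claim is not something the corollary asserts or the paper ever uses --- the corollary serves only to reduce the upper-bound computation to conjugated zonotopes in normal form --- so the correct fix is to delete that paragraph rather than to repair it.
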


\begin{thm}\label{diameter}
Belt diameter of any $d$-dimensional space-filling zonotope $P$ is not greater than $\lceil\log_2d\rceil.$
\end{thm}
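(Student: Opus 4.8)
The plan is to prove, by strong induction on $n$, that every $n$-dimensional space-filling zonotope has belt diameter at most $\lceil\log_2 n\rceil$; the base cases $n=1$ (a segment, belt diameter $0$) and $n=2$ (every two-dimensional parallelohedron has belt diameter $1$) are immediate. For the inductive step at $n=d$, Corollary~\ref{maxdiam} together with Theorem~\ref{conjugate} reduces the problem to the following: for a conjugated space-filling zonotope $Z(E\cup F)$ of dimension $\le d$, bound the belt distance $d_{\mathcal B}(Z(E),Z(F))$ between its two conjugate facets. If the dimension is $e<d$, this distance is at most the belt diameter of $Z(E\cup F)$, which by the induction hypothesis is at most $\lceil\log_2 e\rceil\le\lceil\log_2 d\rceil$; so the only remaining case is a $d$-dimensional conjugated space-filling zonotope, and there I would also use $d\ge 3$ (for $d\le 2$ the bound is already covered by the base cases, and Theorem~\ref{goodbasis} needs $d>2$ anyway).

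So fix such a zonotope and, by Theorem~\ref{goodbasis}, choose coordinates with $E=\{\mathbf e_1,\dots,\mathbf e_{d-1}\}$ and $\mathbf f_j=\mathbf e_d+\sum_{i\in S_j}\mathbf e_i$, $j=1,\dots,d-1$, where every index lies in at most $\lfloor(d-1)/2\rfloor$ of the sets $S_j$. I would fix any index $i_0$, put $T=\{\,j : i_0\notin S_j\,\}$ and $m=|T|$, and note $m\ge\lceil(d-1)/2\rceil\ge 1$. The key step is to insert the facet $Z(G)$ of $Z(V)$ cut off by the coordinate hyperplane $\{x_{i_0}=0\}$, namely $G=V\cap\{x_{i_0}=0\}=\{\mathbf e_i:i\ne i_0\}\cup\{\mathbf f_j:j\in T\}$, which is a facet by Corollary~\ref{faces2} (since $m\ge 1$ makes $G$ span that hyperplane). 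On one side, $Z(E)$ and $Z(G)$ both contain the ridge $Z(\{\mathbf e_i:i\ne i_0\})$, so $d_{\mathcal B}(Z(E),Z(G))\le 1$. On the other side --- and this is where the "at least half the entries of each row are zeros" clause of Theorem~\ref{goodbasis} is essential --- the $m$-dimensional face $\Phi=Z(\{\mathbf f_j:j\in T\})$ is a common face of $Z(G)$ and of $Z(F)$: its generators form a subset of the linearly independent set $F$ (independence of $F$ coming from conjugacy), hence generate a face of $Z(F)$, and since $\lin\{\mathbf f_j:j\in T\}\subseteq\lin F\cap\{x_{i_0}=0\}$ meets $V$ in no further vector, they also generate a face of $Z(G)$.

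Finally I would project $Z(V)$ along the face $\Phi$. By the corollary to Lemma~\ref{projection} the image $Z(\overline V)$ is a space-filling zonotope of dimension $d-m\le\lceil d/2\rceil<d$; since $Z(G)$ and $Z(F)$ both contain $\Phi$, they project onto facets of $Z(\overline V)$, and by Corollary~\ref{projectedpath} a shortest belt path in $Z(\overline V)$ between these two facets lifts to a belt path of the same length between $Z(G)$ and $Z(F)$. Hence $d_{\mathcal B}(Z(G),Z(F))$ is at most the belt diameter of $Z(\overline V)$, which by the induction hypothesis is at most $\lceil\log_2(d-m)\rceil\le\lceil\log_2\lceil d/2\rceil\rceil$ (and is $0$ in the degenerate case $d-m\le 1$). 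Putting the two sides together, $d_{\mathcal B}(Z(E),Z(F))\le 1+\lceil\log_2\lceil d/2\rceil\rceil=\lceil\log_2 d\rceil$, using the elementary identity $\lceil\log_2 d\rceil=1+\lceil\log_2\lceil d/2\rceil\rceil$ valid for every integer $d\ge 2$, which closes the induction.

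The main obstacle is the middle step: recognizing that the coordinate-hyperplane facet $Z(G)$, which is one belt step from $Z(E)$ essentially for free, shares with $Z(F)$ a face whose dimension is guaranteed to be at least $\lceil(d-1)/2\rceil$, so that a single projection cuts the dimension of the remaining problem down to at most $\lceil d/2\rceil$. Everything else --- that $Z(G)$, the projected facets, and $\Phi$ are genuine faces of the expected dimensions, and that the projection is again space-filling --- is routine bookkeeping with Corollary~\ref{faces2}, the lemma on faces of projected zonotopes preceding Corollary~\ref{projectedpath}, and the corollary that a projection of a space-filling zonotope along a face is a parallelohedron.
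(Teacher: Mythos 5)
Your proposal is correct and follows essentially the same route as the paper: reduce to a conjugated zonotope in the normal form of Theorem~\ref{goodbasis}, insert the facet cut off by a coordinate hyperplane $\{x_{i_0}=0\}$ (the paper takes $i_0=d-1$), observe it is one belt step from $Z(E)$ and shares at least $\lceil\frac{d-1}{2}\rceil$ generators with $Z(F)$, then project along that common face to drop the dimension to at most $\lceil\frac{d}{2}\rceil$ and invoke the induction hypothesis via Corollary~\ref{projectedpath}. Your write-up is in fact slightly more careful than the paper's on the bookkeeping (verifying that the common face is genuinely a face of both facets, and handling the case of conjugated zonotopes of dimension strictly less than $d$).
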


\begin{proof}
We will use an induction with obvious base $d=1$ and $d=2.$ Assume that we already shown that for $k<d$ belt diameter of $k$-dimensional space-filling zonotope does not exceed $\lceil\log_2k\rceil;$ we will show this for $d$-dimensional space-filling zonotopes ($d\geq 3$). Because of theorem \ref{conjugate} it is enough to show that for any $d$-dimensional space-filling zonotope $Z(E\cup F)$ belt distance between facets $P_E$ and $P_F$ correspondent to sets $E$ and $F$ is at most $\lceil\log_2d\rceil$ (because the function $\lceil\log_2x\rceil$ is non-decreasing on the set $[1,+\infty)$).

Apply the theorem \ref{goodbasis} to zonotope $Z(E\cup F).$ Consider a face of $Z(E\cup F)$ correspondent to supported hyperplane $\pi$ parallel to hyperplane $x_{d-1}=0.$ This facet contains vectors $\ib e_1,\ldots,\ib e_{d-2}$ and at least one half (i.e. $\lceil\frac{d-1}{2}\rceil$) of vectors from the set $F$, because at least one half of entries of last row of the matrix $A$ from the theorem \ref{goodbasis} are zeros. So the hyperplane $\pi$ determines a facet $P_\pi$ adjacent to the face $P_E$ by a ridge determined by the vector set $E\setminus\{\ib e_{d-1}\}.$ Moreover considered facet $P_\pi$ has at least $\lceil\frac{d-1}{2}\rceil$ common vectors with the facet $P_F;$ denote the set of joint vectors as $F_\pi.$

Project the zonotope $Z(E \cup F)$ along the face determined by vector set $F_\pi.$ The resulted zonotope is a parallelohedron and has dimension at most $d-\lceil\frac{d-1}{2}\rceil=\lceil\frac{d}{2}\rceil.$ The belt diameter of projection is not greater than $\lceil\log_2\lceil\frac{d}{2}\rceil\rceil=\lceil\log_2d\rceil-1.$ Also by lemma \ref{projectedpath} the belt distance between facets $P_F$ and $P_\pi$ in $Z(E\cup F)$ does not exceed the belt diameter of this projection hence belt distance between facets $P_E$ and $P_F$ is not greater than $1+(\lceil\log_2d\rceil-1),$ q.e.d.
\end{proof}

\begin{thm}\label{5dim}Belt diameter of $5$-dimensional space-filling zonotope $P$ does not exceed $2.$
\end{thm}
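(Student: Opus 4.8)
The plan is to reduce, via Corollary~\ref{maxdiam}, to conjugated space-filling zonotopes in the normal form of Theorem~\ref{goodbasis}, and then to analyse the $0/1$-matrix $A$ occurring there. By Corollary~\ref{maxdiam} it suffices to prove $\xi(i)\le 2$ for $2\le i\le 5$. For $i\le 4$ this is immediate from Theorem~\ref{diameter}, since $\lceil\log_2 i\rceil\le 2$. For $i=5$ I observe that applying Theorem~\ref{conjugate} to any pair of facets of a $5$-dimensional conjugated space-filling zonotope again produces a conjugated space-filling zonotope of dimension at most $5$ (again space-filling, because a vector subset of a space-filling zonotope is space-filling); so, using the case $i\le 4$ in the lower-dimensional situations, it is enough to prove that $d_\mathcal{B}^{Z(E\cup F)}(P_E,P_F)\le 2$ for every $5$-dimensional conjugated space-filling zonotope $Z(E\cup F)$, where $P_E,P_F$ are the facets determined by the conjugated sets. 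Applying Theorem~\ref{goodbasis}, I may assume that $E=\{\ib e_1,\dots,\ib e_4\}$ is the standard coordinate frame of the hyperplane $x_5=0$ in $\mathbb{R}^5$ and that each $\ib f_j$ has last coordinate $1$ and first four coordinates equal to the $j$-th column of a $4\times4$ matrix $A$ over $\{0,1\}$ with at least two zeros in every row; then $P_E=Z(E)$, and $P_F=Z(F)$ because conjugacy gives $\ib e_i\notin\lin F$ (which also forces $F$ to be linearly independent).

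Next I would reformulate what a length-$2$ belt path looks like. Since the vectors of $E$ have last coordinate $0$ and those of $F$ have last coordinate $1$, the sets $E$ and $F$ are disjoint, so $P_E$ and $P_F$ are neither equal, nor opposite, nor in a common belt, whence $d_\mathcal{B}(P_E,P_F)\ge 2$. By Corollary~\ref{faces2} the facets of $Z(E\cup F)$ correspond to the closed $4$-dimensional subsets of $E\cup F$, and, since $E$ and $F$ are linearly independent, a belt path of length $2$ from $P_E$ to $P_F$ is exactly a facet $G=Z(W)$ with $W=\{\ib e_i:i\in I\}\cup\{\ib f_j:j\in J\}$ closed, $|I|=|J|=3$, $\dim\lin W=4$. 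Writing $\{i_0\}=\{1,2,3,4\}\setminus I$, the vectors $\ib e_i$, $i\in I$, span the coordinate subspace $\{x_{i_0}=x_5=0\}$, and a short computation shows that ``$\dim\lin W=4$ and $W$ closed'' together amount to: the entries $A_{i_0j}$, $j\in J$, are all equal while the remaining entry of row $i_0$ is distinct from them. Hence a middle facet $G$ exists if and only if some row of $A$ has three equal entries and one differing entry.

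Now I would split into two cases. Because every row has at least two zeros, a row can fail to have the shape ``three equal, one different'' only by consisting of exactly two zeros and two ones; a row of four zeros is impossible, since then $F$ would lie in the coordinate hyperplane $x_{i_0}=0$, contradicting conjugacy. In the first case some row $i_0$ of $A$ has exactly three zeros; I take $G$ to be the facet of $Z(E\cup F)$ supported by the hyperplane $x_{i_0}=0$, whose generators are, by Lemma~\ref{faces}, the vectors of $E\cup F$ parallel to it, namely $\{\ib e_i:i\ne i_0\}$ together with the three vectors $\ib f_j$ with $A_{i_0j}=0$. Then $G$ shares the ridge $Z(\{\ib e_i:i\ne i_0\})$ with $P_E$ and the ridge $Z(\{\ib f_j:A_{i_0j}=0\})$ with $P_F$, so $P_E,G,P_F$ is a belt path of length $2$ and $d_\mathcal{B}(P_E,P_F)\le 2$.

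The second case --- every row of $A$ made of two zeros and two ones --- is where the real difficulty lies: I claim it is vacuous, i.e.\ no such $A$ gives conjugated $E$ and $F$. I would argue through the matrix $A$, viewing its rows as the unsigned edge vectors $\ib e_j+\ib e_k$ of a $4$-vertex multigraph $G'$ (the vertices being the columns $1,\dots,4$), so that the rows span a subspace of dimension $4-b$, where $b$ is the number of bipartite connected components of $G'$ (an isolated vertex counting as a bipartite component). If $\operatorname{rank}A\le 2$, then $\ker A$ meets the hyperplane $\{\ib x:\ib 1^{T}\ib x=0\}$ nontrivially, so $F$ is linearly dependent and conjugacy fails. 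If $\operatorname{rank}A=3$, then $G'$ is either a $4$-cycle --- in which case $\ker A$ is spanned by an alternating $\pm1$ vector, again lying in $\{\ib 1^{T}\ib x=0\}$, so $F$ is dependent --- or a non-bipartite multigraph on three of the vertices together with one isolated vertex, in which case the one-dimensional left kernel of $A$ is spanned by a vector with a zero coordinate and, since here the functional cutting out $\lin F$ reduces to that vector, some $\ib e_i\in\lin F$. Finally, if $A$ is invertible, then $G'$ must be a triangle with a single pendant edge, and a direct computation shows that $\ib 1^{T}A^{-1}$ has a zero coordinate (the functional cutting out $\lin F$ being $(\ib 1^{T}A^{-1},-1)$), so again some $\ib e_i\in\lin F$. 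In every subcase conjugacy fails, so the second case does not occur, which finishes the argument. I expect this last analysis of the $4\times4$ matrix $A$ to be the only genuinely delicate point; everything else is bookkeeping with the facet structure of zonotopes developed in Sections~2 and~3.
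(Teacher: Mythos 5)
Your overall architecture is the same as the paper's: reduce via Theorem~\ref{conjugate} and Theorem~\ref{goodbasis} to a $4\times4$ $0/1$-matrix $A$ with at least two zeros per row, observe that a row with three zeros yields a middle facet supported by $x_{i_0}=0$, and then show that ``every row has exactly two zeros and two ones'' is incompatible with conjugacy. The first case and the facet-level bookkeeping are fine. The problem is in your treatment of the second case, specifically the claim that $\operatorname{rank}A=3$ forces $G'$ to be either a $4$-cycle or a non-bipartite multigraph on three vertices plus an isolated vertex. That enumeration is incomplete: a connected bipartite multigraph on $4$ vertices with $4$ edges and exactly one bipartite component can also be a spanning tree with one doubled edge, e.g.
$$A=\begin{pmatrix}1&1&0&0\\1&1&0&0\\1&0&1&0\\1&0&0&1\end{pmatrix},$$
the star centered at column $1$ with the edge to column $2$ doubled. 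Here every row has two zeros and two ones, $\operatorname{rank}A=3$, the right kernel is spanned by $(1,-1,-1,-1)$, which is \emph{not} orthogonal to $\ib 1$, so $F$ is linearly independent and your ``$4$-cycle'' argument does not apply; and the graph is connected and bipartite, so your ``isolated vertex'' argument does not apply either. (Conjugacy does still fail for this $A$ --- the functional cutting out $\lin F$ is $(1,-1,0,0,0)$, so $\ib e_3,\ib e_4\in\lin F$ --- but establishing that requires the missing subcase: when the unique bipartite component has parts of unequal size, one must argue via the left kernel generated by the doubled edge, not via dependence of $F$. The parts-of-size-$2$-and-$2$ case such as a path with a doubled edge is also absent from your list, though it is disposed of by the same computation as the $4$-cycle.)

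The paper avoids this entire rank/graph analysis with a two-line pigeonhole: each row with two zeros and two ones determines one of the only three partitions of the four columns into two pairs, so among four rows two rows $i,j$ determine the same partition; they are then either equal, in which case all of $F$ and the two vectors $\ib e_k$ with $k\neq i,j$ lie in the hyperplane $x_i=x_j$, or complementary, in which case they all lie in $x_i+x_j=x_5$ --- and either containment contradicts $\dim(F\cup\{\ib e_k\})=5$. You should either adopt that argument or complete your case list (adding the bipartite-with-unequal-parts and tree-plus-doubled-edge configurations and handling them via the left kernel); as written, the second case of your proof has a genuine hole.
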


\begin{proof}
As in the previous theorem it is enough to consider conjugated space-filling zonotopes of dimension at most 5. All space-filling zonotopes of dimension at most 4 has diameter at most 2 because of the theorem \ref{diameter} so we need to show that belt diameter of five-dimensional conjugated space-filling zonotope $Z(E\cup F)$ written in the form of theorem \ref{goodbasis} is equal to 2. This diameter cannot be equal to 1 because facets $P_E$ and $P_F$ determined by vector sets $E$ and $F$ does not have a common ridge.

If there is such $i$ that $i$-th row of the $4\times 4$-matrix $A$ has exactly three zeros then hyperplane $x_i=0$ determines a facet of $Z(E\cup F)$ adjacent by three-dimensional faces with each of facet $P_E$ and $P_F$ because hyperplane $x_i=0$ contains three vectors from each of set $E$ and $F.$

In the other case every row of the matrix $A$ contains exactly two zeros and exactly two ones. So every row of matrix $A$ determines a partition of set of columns into two subsets of two elements. There are four rows so some two of these partitions, say for $i$-th and $j$-th rows, coincides. If $i$-th and $j$-th rows of matrix $A$ are equal then all vectors of the set $F$ lies in the hyperplane $x_i=x_j$ and also two vectors from $E$ (except $\ib e_i$ and $\ib e_j$) lies in this plane. If $i$-th and $j$-th rows are not equal then they can be obtain from each other by switching zeros and ones and all vectors of the set $F$ lies in the hyperplane $x_i+x_j=x_5$ and also there are two vectors of the set $E$ lies in the same hyperplane. In both cases these pair of sets $E$ and $F$ are not conjugated and we have a contradiction.
\end{proof}

\begin{cor}\label{diameter_new}
Belt diameter of any $d$-dimensional space-filling zonotope $P$ is not greater than $\lceil\log_2\frac45d\rceil.$
\end{cor}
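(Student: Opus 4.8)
The plan is to recycle the halving argument from the proof of Theorem~\ref{diameter}, but to terminate the recursion already in dimension~$5$, where Theorem~\ref{5dim} provides a bound strictly better than $\lceil\log_2 5\rceil=3$. Write $f(d)$ for the maximal belt diameter of a $d$-dimensional space-filling zonotope. The first step is to extract from the proof of Theorem~\ref{diameter} the recursion $f(d)\le 1+f\bigl(\lceil d/2\rceil\bigr)$, valid for $d\ge 3$: reducing to a conjugated zonotope $Z(E\cup F)$ by Theorem~\ref{conjugate} and normalizing it via Theorem~\ref{goodbasis}, the hyperplane $x_{d-1}=0$ cuts out a facet $P_\pi$ adjacent to $P_E$ that shares at least $\lceil\tfrac{d-1}{2}\rceil$ generators with $P_F$; projecting along these common generators yields a space-filling zonotope of dimension at most $d-\lceil\tfrac{d-1}{2}\rceil=\lceil d/2\rceil$ whose belt diameter dominates $d_{\mathcal{B}}(P_F,P_\pi)$ by Corollary~\ref{projectedpath}. (One really obtains $f(d)\le 1+\max_{2\le k\le\lceil d/2\rceil}f(k)$, but since $k\mapsto\lceil\log_2\tfrac45 k\rceil$ is non-decreasing this is harmless.)

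Then I would run a strong induction on $d$ with base cases $d\le 5$: for $d\le 2$ the claim is elementary; for $d\in\{3,4\}$ Theorem~\ref{diameter} gives $f(d)\le 2=\lceil\log_2\tfrac45 d\rceil$; and for $d=5$ Theorem~\ref{5dim} gives $f(5)\le 2=\lceil\log_2 4\rceil$. For $d\ge 6$ one has $3\le\lceil d/2\rceil<d$, so the recursion and the inductive hypothesis yield
$$f(d)\ \le\ 1+\Bigl\lceil\log_2\bigl(\tfrac45\lceil d/2\rceil\bigr)\Bigr\rceil\ =\ \Bigl\lceil\log_2\bigl(\tfrac85\lceil d/2\rceil\bigr)\Bigr\rceil .$$
If $d$ is even this equals $\lceil\log_2\tfrac45 d\rceil$ and we are done; if $d$ is odd then $\tfrac85\lceil d/2\rceil=\tfrac{4(d+1)}{5}$, and it remains to see that $\bigl\lceil\log_2\tfrac{4(d+1)}{5}\bigr\rceil=\bigl\lceil\log_2\tfrac45 d\bigr\rceil$. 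These can differ only if some power $2^m$ satisfies $\tfrac45 d\le 2^m<\tfrac45(d+1)$; multiplying by $5/4$ this forces $\tfrac54 2^m$ to equal $d$ or to be an integer strictly between $d$ and $d+1$, hence $d=5\cdot 2^{\,j}$ for some $j\ge 0$ — impossible for an odd $d\ge 7$.

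I expect the only genuinely delicate point to be this last arithmetic check on the ceilings for odd $d$; the rest is assembly of results already in hand. The one bookkeeping subtlety is to isolate $d=5$ as a base case, so that the recursion in dimensions $9,10,\dots$ is fed by the sharp Theorem~\ref{5dim} rather than by the weaker Theorem~\ref{diameter}.
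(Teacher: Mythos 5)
Your proposal is correct and is essentially the paper's own argument: the paper's proof of Corollary~\ref{diameter_new} consists precisely of rerunning the induction of Theorem~\ref{diameter} with the base enlarged to $d\leq 5$ (using Theorems~\ref{diameter} and~\ref{5dim} for the base). You have simply made explicit the recursion $f(d)\leq 1+f(\lceil d/2\rceil)$ and the ceiling arithmetic $\lceil\log_2\frac{4(d+1)}{5}\rceil=\lceil\log_2\frac{4d}{5}\rceil$ for odd $d\geq 7$, which the paper leaves unstated; these checks are correct.
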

\begin{proof}
We can the same arguments as in theorem \ref{diameter}. All we need is to replace the base of induction from $d=1$ and $d=2$ to $d\leq 5.$ This new base is true due to theorems \ref{diameter} and \ref{5dim}.
\end{proof}

\begin{rem}
The theorem \ref{diameter} in dimension five gives a non-exact estimate.
\end{rem}

\begin{thm}
In the case of dimension $6$ there exists a space-filling zonotope with combinatorial diameter $3$ so the estimate of the theorem $\ref{diameter}$  and corollary $\ref{diameter_new}$ is sharp.
\end{thm}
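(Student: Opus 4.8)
The plan is to make the estimate $\lceil\log_2 d\rceil$ sharp for $d=6$ by exhibiting a single $6$-dimensional space-filling zonotope and checking that its belt diameter equals $3$, the upper bound $\leq\lceil\log_2 6\rceil=3$ being free from Theorem~\ref{diameter} (or Corollary~\ref{diameter_new}). By Theorems~\ref{conjugate} and~\ref{goodbasis} it is enough to produce a conjugated space-filling zonotope $Z(E\cup F)$ of dimension $6$, in the normal form of Theorem~\ref{goodbasis}, for which the facets $P_E=Z(E)$ and $P_F=Z(F)$ are at belt distance $3$; then the belt diameter of that zonotope is at least $3$, hence exactly $3$.

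The example I would take is $V=E\cup F\subset\mathbb{R}^6$ with $E=\{\mathbf e_1,\dots,\mathbf e_5\}$ (the first five standard basis vectors, sixth coordinate $0$) and $F=\{\mathbf f_1,\dots,\mathbf f_5\}$ where $\mathbf f_1=\mathbf e_1+\mathbf e_5+\mathbf e_6$, $\mathbf f_2=\mathbf e_1+\mathbf e_2+\mathbf e_6$, $\mathbf f_3=\mathbf e_2+\mathbf e_3+\mathbf e_6$, $\mathbf f_4=\mathbf e_3+\mathbf e_4+\mathbf e_6$, $\mathbf f_5=\mathbf e_4+\mathbf e_5+\mathbf e_6$; equivalently the block $A$ of Theorem~\ref{goodbasis} is the vertex--edge incidence matrix of the $5$-cycle, so each of its rows and columns carries exactly two ones (in particular the ``at least half the entries are zeros'' normalization holds). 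Two preliminary points need checking. First, $E$ and $F$ are conjugated: $E\cup\{\mathbf f_i\}$ obviously spans $\mathbb{R}^6$, while $\lin F$ is the hyperplane with normal $(1,1,1,1,1,-2)$, none of whose first five coordinates vanishes, so no $\mathbf e_i$ lies in $\lin F$. Second, $Z(V)$ is a parallelohedron: one verifies the criterion of Lemma~\ref{2or3} on the (few, up to the cyclic symmetry) relevant $4$-element subsets of $V$ --- equivalently, that $V$ is a unimodular system; in fact $V$ is a unimodular representation of the cographic matroid $M^{*}(K_5)$, which is regular.

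The core of the argument is that $d_{\mathcal B}(P_E,P_F)=3$. Using Corollary~\ref{faces2} I would describe the facets of $Z(V)$ as the closed $5$-dimensional subsets $S\subseteq V$ (those with $(\lin S)\cap V=S$), observe that the intersection of two such sets is again closed, and recall --- as in the proof of Theorem~\ref{conjugate} --- that two facets $Z(S),Z(S')$ lie in a common belt exactly when $\dim\lin(S\cap S')=d-2$. Since $E$ and $F$ are linearly independent, a facet lying in a common belt with $P_E$ has defining set $S$ with $S\cap E=E\setminus\{\mathbf e_i\}$ for some $i$, and a common belt with $P_F$ forces $S\cap F=F\setminus\{\mathbf f_j\}$ for some $j$. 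Hence a facet realizing a belt path of length $2$ from $P_E$ to $P_F$ would have a $5$-dimensional defining set $S$ containing the eight vectors $(E\setminus\{\mathbf e_i\})\cup(F\setminus\{\mathbf f_j\})$, and I would rule this out by a dimension count: $E\setminus\{\mathbf e_i\}$ spans the $4$-plane $\{x_i=x_6=0\}$, and the four remaining vectors $\mathbf f_k$ ($k\neq j$) all have sixth coordinate $1$ but, since row $i$ of $A$ has two ones and three zeros and still contains both a one and a zero after deleting its $j$-th entry, do not all share the same $i$-th coordinate; so these eight vectors already span all of $\mathbb{R}^6$ and cannot lie in a hyperplane. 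As $P_E$ and $P_F$ are themselves not in a common belt ($E\cap F=\emptyset$), this gives $d_{\mathcal B}(P_E,P_F)\geq 3$, and together with $\lceil\log_2 6\rceil=3$ the belt diameter of $Z(V)$ is exactly $3$.

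The one step I expect to be genuine (if finite) work is verifying that the chosen $V$ is space-filling; everything else is either a short linear-algebra check or the combinatorial bookkeeping above. It is also worth recording why $6$ is the first dimension in which the bound is met: in dimension $5$ the analogue of $A$ would be a $4\times 4$ $0/1$ matrix all of whose rows have two ones, and the pigeonhole argument in the proof of Theorem~\ref{5dim} shows that such $E$ and $F$ can never be conjugated --- which is precisely the obstruction that disappears once there is one more row and column.
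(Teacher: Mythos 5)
Your construction is the paper's own example---the block $A$ is exactly the vertex--edge incidence matrix of the $5$-cycle---and your lower-bound argument (a facet belt-adjacent to both $P_E$ and $P_F$ would have to contain $E\setminus\{\mathbf e_i\}$ together with four vectors of $F$ whose $i$-th coordinates cannot all agree, so its generating set would span all of $\mathbb{R}^6$) is the same as the paper's. The one substantive piece you defer, the verification that $Z(E\cup F)$ is space-filling, is in fact the bulk of the paper's proof: an explicit case analysis of the closed four-dimensional subsets of $E\cup F$ via Lemma~\ref{2or3}, organized by the dihedral symmetry exchanging the two pentagons. Your proposed shortcut via regularity of $M^{*}(K_5)$ is plausible but rests on the equivalence between unimodular vector systems and space-filling zonotopes (Erdahl, McMullen), which the paper cites in the bibliography but never states as a usable lemma; note also that the direct check must run over four-\emph{dimensional} closed subsets rather than four-\emph{element} ones (some four-element subsets of $E\cup F$ are dependent, and some four-dimensional faces are generated by five vectors), a distinction the paper's case analysis explicitly tracks.
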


\begin{proof}
Consider a conjugated zonotope $Z=Z(E\cup F)\subset \mathbb{R}^6$ defined by the vector set
$$V=E\cup F=\left(
\begin{array}{rrrrr|rrrrr}
1&0&0&0&0&1&1&0&0&0\\
0&1&0&0&0&0&1&1&0&0\\
0&0&1&0&0&0&0&1&1&0\\
0&0&0&1&0&0&0&0&1&1\\
0&0&0&0&1&1&0&0&0&1\\\hline
0&0&0&0&0&1&1&1&1&1
\end{array}\right).$$
Here the first columns of the matrix $V$ are vectors $\ib e_1,\ib e_2,\ib e_3,\ib e_4,\ib e_5$ and the last five are vectors $\ib f_1,\ib f_2,\ib f_3,\ib f_4,\ib f_5$ respectively. Further we will show that this zonotope has belt diameter greater than 2 and is a parallelohedron, this is sufficient to prove this theorem.

Assume that belt diameter of $Z$ is 2 then there exists a facet $P$ of $Z$ with common ridges with  both facets $P_E$ and $P_F$ determined by sets $E$ and $F$. Hence there are at least four vectors from $E$ (except $\ib e_i, i<6$) and at least four vectors from $F$ parallel to $P.$ And this is impossible because in that case $i$-th coordinates of four vectors from $F$ must be equal and $i<6.$ So belt diameter of $Z$ is at least 3.

In order to show that $Z$ is a parallelohedron we will validate the lemma \ref{2or3} for every four-dimensional subset of the vector set $E\cup F.$ Denote the sixth vector of initial basis as $\ib g.$ Note that after cyclic transposition of the first five vectors of the initial basis vectors of sets $E$ and $F$ also transposing cyclically. Moreover any automorphism of the left pentagon on the next figure induces the same automorphism of the right pentagon (rotation corresponds to rotation and axial symmetry corresponds to axial symmetry about parallel line) and both these automorphisms together determines a transposition of the vector set $E\cup F$ preserving the polytope $Z.$

\begin{figure}[!ht]
\begin{center}
\includegraphics{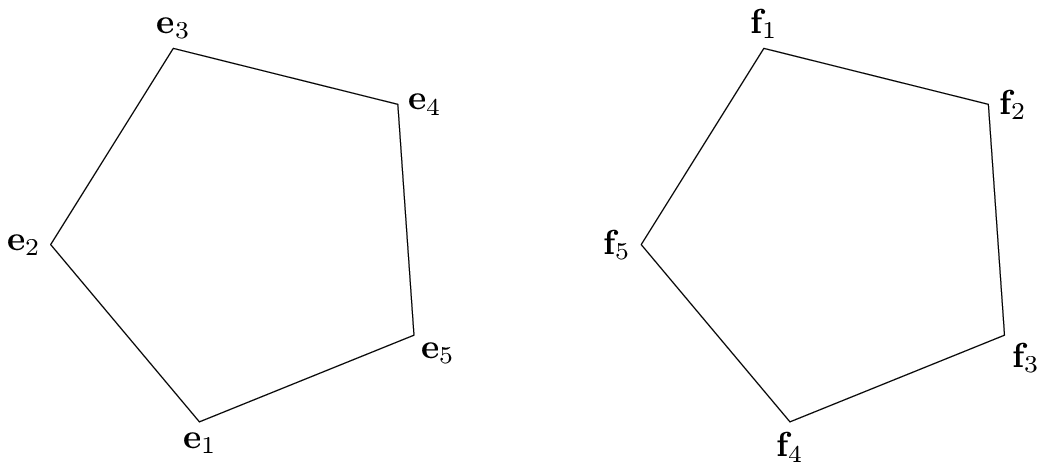}
\caption{The pentagons correspondent to $E$ and $F.$}
\end{center}
\end{figure}

Furthermore vectors of the set $E$ can be written in the basis $\ib f_1,\ib f_2,\ib f_3,\ib f_4,\ib f_5,\ib g$ as 
$$\ib e_i=\frac12(\ib f_i+\ib f_{i+1}+\ib f_{i+3}-\ib f_{i+2}-\ib f_{i+4})-\frac12\ib g.$$
So after replacing the vector $\ib g$ with vector $\frac12(\ib f_1+\ib f_2+\ib f_3+\ib f_4+\ib f_5-\ib g),$ vectors $\ib f_i$ with vectors $-\ib f_{3i}$ and vectors $\ib e_j$ with $\ib e_{3j+3}$ (these transpositions does not change the zonotope $Z(E\cup F)$) we will have a new basis $\{F\cup\ib g\}$ such that the set $F \cup E$ will have the same representation as a matrix in the new basis as the set $E\cup F$ in the old one. (Here we consider indices for sets $E$ and $F$ modulo 5.) We have constructed a map of the left pentagon onto the right pentagon and vice versa and this map does not changing the affine and the combinatorial structure of the zonotope $Z.$ This map allows us to consider only four-dimensional subsets of $E\cup F$ with at least half of vectors from $E.$ Moreover if considered four-dimensional subset contains two vectors from $E$ and two vectors from $F$ then we can examine only the cases with vectors from $E$ which are neighbor on the left pentagon or vectors from $E$ are not neighbor and vector from $F$ are neighbor on the right pentagon (because images of non-neighbors on the right pentagon are neighbors on the left one).

Let $G$ denotes considered four-dimensional subset of $E\cup F$ and $G$ contains at least two vectors from $E.$ We need to examine the following cases:

\begin{enumerate}\renewcommand{\theenumi}{\roman{enumi}}
\item The set $G$ contains four vectors from the set $E,$ without loss of generality (by the first remark about automorphisms of pentagons) we can assume that these vectors are $\ib e_1,\ib e_2,\ib e_3,\ib e_4.$

    We will project remaining vectors of the set $E\cup F$ on the plane generated by vectors $\ib e_5$ and $\ib g.$ The vector $\ib e_5$ projects on itself; vectors $\ib f_1$ and $\ib f_5$ projects on vector $\ib e_5+\ib g;$ vectors $\ib f_2,\ib f_3,\ib f_4$ projects on vector $\ib g.$ Hence all projections will give us vectors of three directions as desired.
\item\label{2} The set $G$ contains three vectors from $E$ and these three vectors are consecutive on the left pentagon. We can assume that $G$ contains vectors $\ib e_1,\ib e_2$ and $\ib e_3$ from the set $E.$ For the fourth vector from $G$ there are several cases:
    \begin{enumerate}
    \item This vector is the vector $\ib f_1$ (or the vector $\ib f_4$; these cases are analogous because vectors $\ib f_1$ and $\ib f_4$ on the right pentagon are symmetric about line of symmetry of the set of vertices $\ib e_1,\ib e_2,\ib e_3$ on the left pentagon).

        After projection of remaining vectors from $E\cup F$ on the plane generated by vectors $\ib e_4$ and $\ib e_5$ we will have the following picture. Vectors $\ib e_4$ and $\ib f_5$ will be projected into vector $\ib e_4;$ vector $\ib e_5$ will be projected into itself; vectors $\ib f_2$ and $\ib f_3$ will be projected into vector $-\ib e_5;$ vector $\ib f_4$ will be projected into vector $\ib e_4-\ib e_5.$ So we will obtain vectors of three directions as desired.

    \item The set $G$ contains the vector $\ib f_2$ (or the vector $\ib f_3$).

        After projection of remaining vectors from $E\cup F$ on the plane generated by vectors $\ib e_4$ and $\ib e_5$ we will have the following picture. Vectors $\ib e_4$ and $\ib f_4$ will be projected into vector $\ib e_4;$ vectors $\ib e_5$ and $\ib f_1$ will be projected into vector $\ib e_5$; vector $\ib f_3$ projects on zero-vectors because it lies in the four-dimensional subspace generated by vectors from $G$ ($\ib f_3=\ib f_2-\ib e_1+\ib e_3$); vector $\ib f_5$ projects on vector $\ib e_4+\ib e_5.$ So we will obtain vectors of three directions as desired.

    \item The set $G$ contains the vector $\ib f_5.$

        After projection of remaining vectors from $E\cup F$ on the plane generated by vectors $\ib e_4$ and $\ib e_5$ we will have the following picture. Vector $\ib e_4$ projects on itself; vector $\ib e_5$ projects on itself; vector $\ib f_1$ projects on vector $-\ib e_4;$ vectors $\ib f_2$ and $\ib f_3$ projects on vector $-\ib e_4-\ib e_5;$ vector $\ib f_4$ projects on vector $-\ib e_5.$ So we will obtain vectors of three directions as desired.
    \end{enumerate}

\item\label{3} The set $G$ contains three vectors from the set $E$ and these vectors are not consecutive on the left pentagon. We can assume that $G$ contains vectors $\ib e_1,\ib e_2$ and $\ib e_4$ from $E.$ For the fourth vector from the set $G$ we have several cases.
    \begin{enumerate}
    \item The set $G$ contains the vector $\ib f_1$ (or the vector $\ib f_3$).

        After projection of remaining vectors from $E\cup F$ on the plane generated by vectors $\ib e_3$ and $\ib e_5$ we will have the following picture. Vector $\ib e_3$ projects on itself; vector $\ib e_5$ projects on itself; vector $\ib f_2$ projects on vector $-\ib e_5;$ vectors $\ib f_3$ and $\ib f_4$ projects on vector $\ib e_3-\ib e_5;$ vector $\ib f_5$ projects on zero-vector ($\ib f_5=\ib f_1-\ib e_1+\ib e_4$). So we will obtain vectors of three directions as desired.

    \item The set $G$ contains the vector $\ib f_2.$

        After projection of remaining vectors from $E\cup F$ on the plane generated by vectors $\ib e_3$ and $\ib e_5$ we will have the following picture. Vectors $\ib e_3,\ib f_3,\ib f_4$ projects on vector $\ib e_3;$ vectors $\ib e_5,\ib f_1,\ib f_5$ projects on vector $\ib e_5.$ So we will obtain vectors of two directions as desired.

    \item The set $G$ contains vector $\ib f_4$ (or vector $\ib f_5$).

        In this case the vector $\ib f_3$ also lies in the four-dimensional face generated by the set $G$ because $\ib f_3=\ib f_4-\ib e_4+\ib e_2$ and the case with vectors $\ib e_1,\ib e_2,\ib e_4,\ib f_3$ already considered in the \ref{3}(a).
    \end{enumerate}
\item The set $G$ contains two vectors from $E$ and two vectors from $F$ and vectors from $E$ are neighbor in the left pentagon. Without loss of generality we can assume the $G$ contains vectors $\ib e_1$ and $\ib e_2.$ There are several cases for vector from $F$ contained in $G.$
    \begin{enumerate}
    \item The set $G$ contains vectors $\ib f_1$ and $\ib f_2$ (or vectors $\ib f_2$ and $\ib f_3$).

        In this case the considered four-dimensional face also contains the vector $\ib e_5=\ib f_1-\ib f_2+\ib e_2$ and the case with three consecutive vectors on the left pentagon already done in the \ref{2}.

    \item The set $G$ contains vectors $\ib f_1$ and $\ib f_3.$

        After projection of remaining vectors from $E\cup F$ on the plane generated by vectors $\ib e_3$ and $\ib e_4$ we will have the following picture. Vectors $\ib e_3$ and $\ib e_5$ projects on vector $\ib e_3;$ vectors $\ib e_4,\ib f_4$ and $\ib f_5$ projects on vector $\ib e_4;$ vector $\ib f_2$ projects on vector $-\ib e_3.$ So we will obtain vectors of two directions as desired.

    \item The set $G$ contains vectors $\ib f_1$ and $\ib f_4$ (or vectors $\ib f_3$ and $\ib f_5$).

        After projection of remaining vectors from $E\cup F$ on the plane generated by vectors $\ib e_3$ and $\ib e_4$ we will have the following picture. Vector $\ib e_3$ projects on itself; vectors $\ib e_4$ and $\ib f_5$ projects on vector $\ib e_4$; vector $\ib e_5$ projects on vector $\ib e_3+\ib e_4$; vector $\ib f_2$ projects on vector $-\ib e_3-\ib e_4;$ vector $\ib f_3$ projects on vector $-\ib e_4.$ So we will obtain vectors of three directions as desired.

    \item The set $G$ contains vectors $\ib f_1$ and $\ib f_5$ (or vectors $\ib f_3$ and $\ib f_4$).

        In this case the four-dimensional face also contains the vector $\ib e_4=\ib f_5-\ib f_1+\ib e_1$ and this already done in \ref{3}.

    \item The set $G$ contains vectors $\ib f_2$ and $\ib f_4$ (or $\ib f_2$ and $\ib f_5$)

        After projection of remaining vectors from $E\cup F$ on the plane generated by vectors $\ib e_3$ and $\ib e_5$ we will have the following picture. Vectors $\ib e_3$ and $\ib f_3$ projects on vector $\ib e_3;$ vector $\ib e_4$ projects on vector $-\ib e_3;$ vectors $\ib e_5$ and $\ib f_1$ projects on vector $\ib e_5;$ vector $\ib f_5$ projects on vector $\ib e_5-\ib e_3.$ So we will obtain vectors of three directions as desired.

    \item The set $G$ contains vectors $\ib f_4$ and $\ib f_5.$

        After projection of remaining vectors from $E\cup F$ on the plane generated by vectors $\ib e_3$ and $\ib e_4$ we will have the following picture. Vectors $\ib e_3$ and $\ib e_5$ projects on vector $\ib e_3;$ vectors $\ib e_4$ projects on itself; vectors $\ib f_1$ and $\ib f_3$ projects on vector $-\ib e_4;$ vector $\ib f_2$ projects on vector $-\ib e_3-\ib e_4.$ So we will obtain vectors of three directions as desired.

    \end{enumerate}

\item The set $G$ contains two vectors from $E$ and two vectors from $F$ and vectors from $E$ are not neighbors on the left pentagon. We need to consider only the case with consecutive vectors from $F$ on the right pentagon because of our second note about automorphisms of pentagons non-neighbors on the right pentagons corresponds to neighbors on the left one and this case we have already considered. Also without loss of generality we can assume that the set $G$ contains vectors $\ib e_1$ and $\ib e_3$ from the set $E.$ For vectors from $F$ contained in $G$ we have the following cases.

    \begin{enumerate}
    \item The set $G$ contains vectors $\ib f_1$ and $\ib f_2$ (or vectors $\ib f_3$ and $\ib f_4$).

        Then our four-dimensional face also contains vector $\ib f_3=\ib f_2-\ib e_1+\ib e_3,$ so it contains three vectors from $F$ and the case if four-dimensional case contains 3 vectors from one of sets already done in \ref{2} and \ref{3}.

    \item The set $G$ contains vectors $\ib f_2$ and $\ib f_3.$

        Then the set $G$ also must contain at least one more vector from the set $E\cup F$ because vectors $\ib e_1,\ib e_3,\ib f_2,\ib f_3$ are linearly dependent ($\ib f_3=\ib f_2-\ib e_1+\ib e_3$) and does not determine four-dimensional face of the zonotope $Z(E\cup F).$

    \item The set $G$ contains vectors $\ib f_4$ and $\ib f_5$ (or vectors $\ib f_5$ and $\ib f_1$).

        Then this four-dimensional face also contains vector $\ib e_5=\ib f_5-\ib f_4+\ib e_3$ and this case already done in \ref{3}.
    \end{enumerate}
\end{enumerate}

So we considered all possible cases and the theorem has been proved.
\end{proof}

\section{Acknowledgements}

I would like to thank Nikolai Dolbilin for helpful discussion and stating this problem. Also I wish to thank Andrey Gavrilyuk and Alexander Magazinov for discussion and helpful remarks and references.

\end{document}